\newcommand{\ck}[1]{C_{k \geq #1}}
\newcommand{\ns}{\not\subseteq}
\newcommand{\F}{\mathcal{F}}
\newcommand{\C}{\mathcal{C}}
\newtheorem{theorem}{Theorem}[section]
\newtheorem{lemma}[theorem]{Lemma}
\def\longbox#1{\parbox{0.85\textwidth}{#1}}
\begin{document}

\title{Decomposition techniques applied to the \\ Clique-Stable set Separation problem}

\author{%
    Nicolas Bousquet\thanks{Laboratoire G-SCOP, CNRS, Univ. Grenoble Alpes, Grenoble, France.\newline
    The authors are partially supported by ANR project STINT (reference ANR-13-BS02-0007).}
    \and
    Aur\'elie Lagoutte\footnotemark[1] \and
    Fr\'ed\'eric Maffray\footnotemark[1] \and
    Lucas Pastor\footnotemark[1]}

\date{\today}

\maketitle

\begin{abstract}
In a graph, a Clique-Stable Set separator (CS-separator) is a family
$\mathcal{C}$ of cuts (bipartitions of the vertex set) such that for
every clique $K$ and every stable set $S$ with $K \cap S = \emptyset$,
there exists a cut $( W,W')$ in $\mathcal{C}$ such that $K \subseteq
W$ and $S \subseteq W'$.  Starting from a question concerning extended
formulations of the Stable Set polytope and a related complexity
communication problem, Yannakakis \cite{Y} asked in 1991 the following
questions: does every graph admit a polynomial-size CS-separator?  If
not, does every perfect graph do?  Several positive and negative
results related to this question were given recently.  Here we show
how graph decomposition can be used to prove that a class of graphs
admits a polynomial CS-separator.  We apply this method to apple-free
graphs and cap-free graphs.

    \noindent
    \textit{Keywords}: clique-stable set separation, extended formulation, stable set polytope, graph decomposition, apple-free graphs.
\end{abstract}

\section{Introduction}
Let $G$ be any simple finite undirected graph without loop.  In 1991
Yannakakis \cite{Y} asked for the existence of a compact extended
formulation of the Stable Set polytope of $G$, i.e., the existence of
a simpler polytope in higher dimension whose projection is the Stable
Set polytope of $G$ (the Stable Set polytope $STAB(G)$ is the convex
hull in $\mathbb{R}^{|V(G)|}$ of the characteristic vectors of all the
stable sets of $G$).  Such a simpler polytope would correspond to a
linear program with extra variables to solve the Maximum Weighted
Stable Set problem in $G$.  However, there is no good explicit
description of the Stable Set polytope for general graphs.
Consequently, Yannakakis considered this problem in restricted classes
of graphs on which a description of the Stable Set polytope with
equalities and inequalities is known.  This is the case for perfect
graphs: the stable set polytope is exactly the polytope described by
the clique inequalities ($\sum_{v\in K} x_v\leq 1$, for every clique
$K\subseteq V(G)$) together with the non-negativity constraints.  This
case highlights a combinatorial object which provides a combinatorial
lower bound (the so-called \emph{rectangle covering bound}) for the
size of any extended formulation for $STAB(G)$.  

Yannakakis then raised the question below for both perfect graphs and
general graphs.  We state this question in its graph-theoretical
version but it can also be stated as a communication complexity
problem.  In a graph $G$, a \emph{cut} is a partition $(W,W')$ of
$V(G)$ into two subsets.  Given two disjoint subsets of vertices $K$
and $S$, a cut $(W, W')$ \emph{separates} $K$ and $S$ if $K \subseteq
W$ and $S \subseteq W'$.  A \emph{Clique-Stable Set separator
(CS-separator)} is a family $\mathcal{C}$ of cuts such that, for every
clique $K$ and every stable set $S$ with $K \cap S = \emptyset$, there
is a member of $\mathcal{C}$ that separates $K$ and $S$.  The
\emph{size} of a CS-separator is the number of cuts contained in the
family.  Yannakakis's question is the following:
\begin{equation*}
\longbox{{\it Does there exist a Clique-Stable set separator
consisting of polynomially many cuts?}}
\end{equation*}
Yannakakis proved in~\cite{Y} that every graph on $n$
vertices admits a CS-Separator of size $n^{\mathcal{O}(\log n)}$.  He
also proved that the existence of a polynomial-size CS-separator is a
necessary condition for the existence of a polynomial (more frequently
called \emph{compact}) extended formulation of the Stable Set
polytope.

The existence or not, for every graph $G$, of a polynomial-size
CS-separator and/or a compact extended formulation for $STAB(G)$ has
been a big open question since 1991; see e.g.~Lov\'asz's survey
\cite{L}.  It has seen a renewed interest since 2012 with the
following two results: on the one hand, Fiorini~\emph{et al.}
\cite{Fio} proved that some graphs do not admit a compact extended
formulation for $STAB(G)$; on the other hand, Huang and Sudakov
\cite{HuangS12} proved the first superlinear lower bound for the size
of a CS-Separator in general.  This lower bound was then improved in
\cite{Am} and then in \cite{ShigetaA}.  Eventually, G\"o\"os \cite{G}
proved in 2015 that some graphs do not admit a polynomial-size
CS-Separator.  Hence, Yannakakis's two questions have been finally
given a negative answer in the general case.  However, determining
which graph classes admit a polynomial CS-Separator
remains a widely open problem. %
A class $\mathcal{C}$ of graphs is said to \emph{have the polynomial
CS-Separation property} if there exists a polynomial $P$ such that
every graph $G\in \mathcal{C}$ admits a CS-Separator of size
$P(|V(G)|)$.  We further say that $\mathcal{C}$ has the
$\mathcal{O}(n^c)$-CS-Separation property if $P(n)$ is a
$\mathcal{O}(n^c)$ in the previous definition.

Given a family of graphs $\mathcal{F}$, a graph $G$ is
\emph{$\mathcal{F}$-free} if no induced subgraph of $G$ is isomorphic
to a member of $\mathcal{F}$.  If $\mathcal{F}$ is composed of only
one element $F$, we say that $G$ is $F$-free.  As usual we let $P_k$
and $C_k$ denote respectively the chordless path and chordless cycle
on $k$ vertices.  A \emph{hole} is any chordless cycle on at least
four vertices.  An \emph{antihole} is the complementary graph of a
hole.

It is easy to see that when the number of maximal cliques, or the
number of maximal stable sets, is a polynomial in the size of the
graph, then there exists a polynomial-size CS-Separator.  In
particular, this happens when $\omega(G)$ or $\alpha(G)$ is bounded by
a constant, and by \cite{A}, this also happens for $C_4$-free graphs.

Concerning subclasses of perfect graphs, Yannakakis \cite{Y} proved
that comparability graphs and chordal graphs have the polynomial
CS-separator property, and they even have compact extended
formulations.  Since a CS-Separator of the complement graph can be
trivially obtained from a CS-Separator of the graph, the polynomial
CS-Separation property also holds for the complements of comparability
graphs and complements of chordal graphs.  Moreover, Lagoutte et
al.~\cite{LT} proved that perfect graphs with no balanced
skew-partition have the quadratic CS-Separation property.  Another
interesting subclass of perfect graphs is the class of \emph{weakly
chordal graphs}, which are the graphs that contain no hole of length
at least~$5$ and no antihole of length at least~$5$ \cite{hay}.
Weakly chordal graphs have the polynomial CS-separator property: this 
follows from the main result of~\cite{BBT} and Theorem 12 of~\cite{BLT}. 

Leaving the domain of perfect graphs, Yannakakis proved that
$t$-perfect graphs have the polynomial CS-Separation property, and
even that they have compact extended formulations for the Stable Set
polytope. %
Bousquet et al.~\cite{BLT} showed that, asymptotically almost surely,
there exists a polynomial size CS-separator for $G$ if $G$ is picked
at random.  Furthermore, they showed that $H$-free graphs, where $H$
is any split graph (i.e., a graph whose vertex set can be partitioned
into a clique and a stable set), $P_5$-free graphs, and
($P_k,\overline{P_k}$)-free graphs all satisfy the polynomial
CS-Separation property.  Furthermore, for any $k$ the class of
graphs that contain no hole of length at least~$k$ and no antihole of
length at least~$k$ have the ``strong Erd\H{o}s-Hajnal
property''~\cite{BBT} (note that weakly chordal graphs correspond to
the case $k=5$), and every hereditary class of graphs that has the
strong Erd\H{o}s-Hajnal property has the polynomial CS-Separation
property \cite{BLT}.
With the results from~\cite{GKPP} in which the authors devise a polynomial-time
algorithm for the Maximum Weight Independent Set problem in $P_6$-free graphs
and by the same techniques as for the $P_5$-free graphs and Corollary 15
from~\cite{BLT}, the class of $P_6$-free graphs also has the polynomial
CS-Separation property.


\paragraph{Our contribution.}
The goal of this paper is to show that graph decomposition can be used
to prove that a class of graphs has the polynomial CS-Separation
property.  In the past decades many decomposition theorems have been
proposed for graph classes, and especially for subclasses of perfect
graphs.  The existence of such a decomposition was used for example
in~\cite{LT} to prove that a certain subclass of perfect graphs has
the polynomial CS-Separation property.  Conforti~\emph{et
al.}~\cite{ConfortiSTAB} have recently followed a similar approach as
we do in this paper: they study how graph decomposition can help to
prove that the Stable Set polytope admits a compact extended
formulation, when this property holds in ``basic" graphs.

Here we investigate further the relation between graph decomposition
and CS-separators.  To this end, we define in Section~\ref{sec:tree}
the general setting of a decomposition tree and highlight in
Section~\ref{sec:validdec} some decomposition rules that behave well
with respect to CS-Separators.  We then apply this technique in
Section~\ref{sec:applications} to two classes of graphs where existing
decomposition theorems provide both ``friendly" decompositions on one
hand, and ``basic" graphs already having the polynomial CS-Separation
property on the other hand.  These two classes are apple-free graphs
and cap-free graphs (see Section~\ref{sec:applications} for
definitions).  Moreover, we briefly mention diamond-wheel-free graphs
and $k$-windmill graphs.  In Section~\ref{sec:limit}, we show that the
existence of a recursive star-cutset decomposition for a graph $G$
cannot be enough to ensure a polynomial size CS-separator.

\paragraph{Notations and definitions.} For any vertex $v \in
V(G)$, we denote by $N(v) = \{u \in V(G) \mid uv \in E(G)\}$ the set
of vertices adjacent to $v$, called the \emph{neighborhood} of $v$.
The \emph{closed neighborhood} of $v$, denoted by $N[v]$ is the set $\{v\}\cup N(v)$.
The set $V(G)\setminus N[v]$ of vertices not adjacent to $v$ is called the
\emph{anti-neighborhood} of $v$.  
For any $S \subseteq V(G)$ we denote
by $G[S]$ the \emph{induced subgraph} of $G$ with vertex set $S$.
Given a class of graphs $\mathcal{C'}$, a graph $G$ is \emph{nearly
$\mathcal{C'}$} if for every $v\in V(G)$, we have $G\setminus N[v]\in
\mathcal{C'}$.

For two sets $A,B \subseteq V(G)$, we say that $A$ is \emph{complete}
to $B$ if every vertex of $A$ is adjacent to every vertex of $B$, and
we say that $A$ is \emph{anticomplete} to $B$ if no vertex of $A$ is
adjacent to any vertex of $B$.  A \emph{module} is a set $M \subseteq
V(G)$ such that every vertex in $V(G) \setminus M$ is either complete
to $M$ or anticomplete to $M$.  A module $M$ is \emph{trivial} if
either $|M| = 1$ or $M = V(G)$.  A graph is \emph{prime} if all its
modules are trivial.

A \emph{vertex cut} $C$ is a subset of vertices such that $G[V
\setminus C]$ is disconnected.  We say that $C$ is a \emph{minimal
vertex cut} if $C$ does not contain any other vertex cut.  A
\emph{clique-cutset} is a vertex cut that induces a clique.  A graph
that does not contain any clique-cutset is called an \emph{atom}.

A graph is \emph{chordal} if it is does not contain as
an induced subgraph any $C_k$ for $k \geq 4$.   A class of graphs
is \emph{hereditary} if it is closed under taking induced subgraphs.



\section{General setting of the method}
\label{sec:tree}

Decomposition trees have been widely used to solve algorithmic
problems on graphs.  The idea consists in
breaking the graph into smaller parts in a divide-and-conquer
approach.  The problem is solved on the smaller parts and then these
partial solutions are combined in some way into a solution for the
whole graph.  To solve the problem on the smaller parts, one
recursively breaks them down into smaller and smaller graphs until one
reaches graphs that are simple enough -- they form the leaves of the
decomposition tree.  Then, from solutions on the leaves, one follows a
bottom-up approach to build a solution for the parents of the leaves,
and ultimately for the whole graph.  The decomposition rules depend
both on the class of graphs under study and on the problem under
consideration.  We try to present here the most general setting.

A \emph{decomposition} of a graph $G$ is a pair $(G_1, G_2)$ where
$G_1$ and $G_2$ are proper induced subgraphs of $G$ ($G_1$ and $G_2$ 
are often   called \emph{blocks of the decomposition} in the literature).
%
A decomposition $(G_1, G_2)$ is \emph{valid (with respect to the
CS-Separation)} if, given a CS-Separator of size $f_1$ of $G_1$ and a
CS-Separator of size $f_2$ of $G_2$, there exists a CS-Separator of
size $f_1+f_2$ of $G$.  We discuss in detail in
Section~\ref{sec:validdec} which graph decompositions are valid.
 
Let $\mathcal{C'}$ be a class of graphs.  Given a graph $G$, a rooted
binary tree $T$ is a \emph{valid decomposition tree for $G$ with
leaves in $\mathcal{C'}$} if the following conditions hold (an example is described in Figure~\ref{fig: decompo tree}):
 \begin{itemize}
\item 
There exists a map $\varphi:V(T)\to \mathcal{P}(V(G))$ that associate
with every $t\in V(T)$ a subset of vertices of $G$.  We say that
$G[\varphi(t)]$ is the \emph{subgraph at node $t$}\footnote{As is
usually the case to avoid confusion, vertices of the tree
decomposition will be called nodes.}.
\item 
If $r$ is the root of $T$, then $\varphi(r)=V(G)$, i.e., $G$ itself is
the subgraph at node $r$.  
\item 
If $f$ is a leaf of $T$, then $G[\varphi(f)]\in \mathcal{C'}$.
\item 
If $t$ is a node of $V(T)$ with children $s, s'$, then
$(G[\varphi(s)], G[\varphi(s')])$ is a valid decomposition of
$G[\varphi(t)]$.
 \end{itemize}
We easily obtain the following:
 
\begin{lemma} \label{lem: CS-Sep from poly valid decompo tree}
Let $\mathcal{C'}$ be a class of graphs 
having the $\mathcal{O}(n^c)$-CS-Separation property, 
 for some
constant $c>0$.  Let $G$ be a graph on $n$ vertices admitting a valid
decomposition tree $T$ with leaves in $\mathcal{C'}$, and $L(T)$ be
the number of leaves of $T$.  Then there exists a CS-Separator for $G$
of size $\mathcal{O}(L(T)\cdot n^c)$.
 \end{lemma}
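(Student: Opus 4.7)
The plan is to proceed by induction on the height of the rooted binary tree $T$ (equivalently, a bottom-up traversal), and to establish the stronger statement that, for every node $t\in V(T)$, the subgraph $G[\varphi(t)]$ admits a CS-Separator whose size is bounded by the sum, over all leaves of the subtree of $T$ rooted at $t$, of the sizes of the CS-Separators at those leaves.

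For the base case, let $f$ be any leaf of $T$. By definition of a valid decomposition tree with leaves in $\mathcal{C'}$, we have $G[\varphi(f)]\in \mathcal{C'}$. Since $\mathcal{C'}$ has the $\mathcal{O}(n^c)$-CS-Separation property, there is a constant $C>0$ such that $G[\varphi(f)]$ admits a CS-Separator of size at most $C\cdot |\varphi(f)|^c \leq C\cdot n^c$, using $|\varphi(f)|\leq |V(G)|=n$.

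For the inductive step, let $t$ be an internal node with children $s, s'$. By induction, $G[\varphi(s)]$ and $G[\varphi(s')]$ admit CS-Separators of respective sizes $f_1$ and $f_2$ bounded by the sums over their leaf-descendants. By the definition of a valid decomposition tree, $(G[\varphi(s)], G[\varphi(s')])$ is a valid decomposition of $G[\varphi(t)]$, so by the definition of validity (with respect to CS-Separation) the two CS-Separators can be combined into a CS-Separator of $G[\varphi(t)]$ of size $f_1+f_2$. Since the set of leaf-descendants of $t$ is the disjoint union of those of $s$ and $s'$, this sum is exactly the quantity required by the inductive statement.

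Applying this to the root $r$, for which $G[\varphi(r)]=G$ and whose leaf-descendants are exactly all $L(T)$ leaves of $T$, we obtain a CS-Separator of $G$ of size at most $L(T)\cdot C\cdot n^c = \mathcal{O}(L(T)\cdot n^c)$. There is essentially no obstacle here: the whole content of the lemma is encoded in the two definitions (valid decomposition and $\mathcal{O}(n^c)$-CS-Separation property), and the proof is a direct induction. The only minor subtlety to check is that the bound $|\varphi(f)|\leq n$ is used uniformly at every leaf, which is immediate since $\varphi(f)\subseteq V(G)$.
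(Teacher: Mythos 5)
Your proof is correct and follows essentially the same route as the paper: bound each leaf's CS-Separator by $\mathcal{O}(n^c)$ and combine them up the tree using the definition of a valid decomposition, yielding a total of $\sum_i f_i = \mathcal{O}(L(T)\cdot n^c)$. The only difference is that you make the bottom-up induction explicit, where the paper leaves it implicit; there is no gap.
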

 
 \begin{proof}
Let $\ell_1, \ldots , \ell_{L(T)}$ be the leaves of $T$.  Then for
each $i$, $G[\varphi(\ell_i)]$ admits a CS-Separator of size $f_i$
with $f_i=\mathcal{O}(|\varphi(\ell_i)|^c)= \mathcal{O}(n^c)$ (since
$|\varphi(\ell_i)|\leq n$).  By definition of a valid decomposition,
we obtain that $G$ admits a CS-Separator of size $\sum_{i=1}^{L(T)}
f_i= \mathcal{O}(L(T)\cdot n^c)$.
 \end{proof}
 
>From Lemma~\ref{lem: CS-Sep from poly valid decompo tree}, we can
design a proof strategy.  Let $\mathcal{C}$ be a class of graphs in
which we want to prove that the polynomial CS-Separation property holds.  We need
to reach the following intermediate goals:
\begin{itemize}
\item 
Find a suitable class $\mathcal{C'}$ having the polynomial CS-Separation
property, 
\item 
Prove that every $G\in \mathcal{C}$ admits a valid decomposition tree
$T(G)$ with leaves in $\mathcal{C}'$, and 
\item 
Prove that $T(G)$ has size polynomial in $|V(G)|$.
\end{itemize}

In practice, the last item is often the hardest to obtain.  However,
one way to get over this problem consists in labeling the nodes of the
tree $T(G)$.  Let $\mathcal{S}$ be a polynomial set of subsets of
$V(G)$ (e.g. $\mathcal{S}$ contains only subsets of at most $k$
vertices for some fixed $k$).  A \emph{$\mathcal{S}$-labeling} of
$T(G)$ is a map $\ell:I(T)\to \mathcal{S}$ where $I(T)$ is the set of
internal nodes of $T$, with the condition that $\ell(t)\subseteq
\varphi(t)$ for every $t\in T$.  In other words, the label of a node
$t$ must contain only vertices from the subgraph at node $t$ (see Figure \ref{fig: decompo tree} for an illustration of the definition).  The
$\mathcal{S}$-labeling is \emph{injective} if $\ell(t)\neq \ell(t')$
whenever $t\neq t'$.  The existence of an injective
$\mathcal{S}$-labeling ensures that $|I(T)|\leq |\mathcal{S}|$ and
consequently $|V(T)|\leq 2|I(T)|+1\leq 2|\mathcal{S}|+1$ since the
number of leaves of a binary tree is at most its number of internal
vertices plus one.  The following lemma provides sufficient conditions
for a $\mathcal{S}$-labeling to be injective.

\begin{lemma} \label{lem: injective labeling}
Let $G$ be a graph admitting a valid decomposition tree $T(G)$ and
$\ell$ be a $\mathcal{S}$-labeling of $T(G)$.  If for every node $t$
with children $s$, $s'$ we have:
\begin{enumerate}
\item
$\ell(t) \ns \varphi(s)$ and $\ell(t) \ns \varphi(s')$, and 
\item
No member of $\mathcal{S}$ is included in $\varphi(s)\cap
\varphi(s')$, unless $s$ or $s'$ is a leaf,
\end{enumerate}
then $\ell$ is injective.
\end{lemma}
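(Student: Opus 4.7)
The plan is to argue by contradiction: assume there exist two distinct internal nodes $t_1,t_2\in I(T)$ with $\ell(t_1)=\ell(t_2)$, and derive a contradiction from conditions~1 and~2. The natural handle on any pair of nodes in a rooted tree is their least common ancestor, so I would let $t$ denote the LCA of $t_1$ and $t_2$ in $T$ and split into two cases depending on whether $t$ coincides with one of $t_1,t_2$ or lies strictly above both.

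In the first case, say $t=t_1$, so $t_2$ is a strict descendant of $t_1$. Then $t_2$ lies in the subtree rooted at one of the two children of $t_1$; call this child $s$. By following the chain of valid decompositions from $t_1$ down to $t_2$, one has $\varphi(t_2)\subseteq\varphi(s)$, and since $\ell$ is a labeling we have $\ell(t_2)\subseteq\varphi(t_2)$. Combining gives $\ell(t_1)=\ell(t_2)\subseteq\varphi(s)$, directly contradicting the first hypothesis $\ell(t_1)\ns\varphi(s)$.

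In the second case, $t$ is strictly above both $t_1$ and $t_2$, so these two nodes lie in the subtrees rooted at the two distinct children $s,s'$ of $t$. The same monotonicity argument yields $\ell(t_1)\subseteq\varphi(s)$ and $\ell(t_2)\subseteq\varphi(s')$, and equality of the labels gives $\ell(t_1)=\ell(t_2)\subseteq\varphi(s)\cap\varphi(s')$. Since $\ell(t_1)\in\mathcal{S}$, the second hypothesis forces $s$ or $s'$ to be a leaf. But then the subtree rooted at that leaf contains only the leaf itself, so whichever of $t_1,t_2$ lies in it would have to equal that leaf, contradicting $t_1,t_2\in I(T)$.

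I do not anticipate a serious obstacle: the argument is a clean LCA case analysis, and both hypotheses are tailored precisely to kill the two cases. The one spot that deserves a careful sentence is the final step of Case~2, where one must remember that the labeling is defined only on \emph{internal} nodes, so the ``unless $s$ or $s'$ is a leaf'' escape clause of condition~2 cannot actually occur.
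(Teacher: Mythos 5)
Your proposal is correct and follows essentially the same argument as the paper: a case analysis on the least common ancestor, using condition~1 to rule out the ancestor--descendant case and condition~2 (together with the observation that the relevant children cannot be leaves since they contain internal nodes) to rule out the incomparable case. The monotonicity $\varphi(t')\subseteq\varphi(s)$ for a descendant $t'$ of a child $s$ is exactly the fact the paper also uses implicitly, so there is no gap.
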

Informally, the first condition ensures that the label we choose for node $t$ is "broken" at
each decomposition (i.e. it does not appear in any children), and the second condition ensures 
that no potential  label is
duplicated, i.e. no potential label appears in both children.

\begin{proof}
Assume by contradiction that $\ell$ is not injective and let $t\neq
t'$ be two internal nodes such that $\ell(t)=\ell(t')$.  Suppose first
that $t$ is an ancestor of $t'$.  Let $s$ and $s'$ be the children of
$t$, where $s'$ is an ancestor of $t'$ (possibly $s'=t'$).  By
definition, $\ell(t')\subseteq \varphi(t')\subseteq \varphi(s')$.  But
$\ell(t)$ is not a subset of $\varphi(s')$, a contradiction with
$\ell(t)=\ell(t')$.  Now, suppose that none of $t,t'$ is an ancestor
of the other.  Let $t_0$ be the closest common ancestor of $t$ and
$t'$, and let $s$ (resp.~$s'$) be the child of $t_0$ which is an
ancestor of $t$ (resp.~of $t'$).  Clearly none of $s, s'$ is a leaf.
Since $\ell(t)\subseteq \varphi(t) \subseteq \varphi(s)$ and similarly
$\ell(t')\subseteq \varphi(t') \subseteq \varphi(s')$, we obtain that
$\varphi(s)\cap \varphi(s')$ contains $\ell(t)=\ell(t')\in
\mathcal{S}$: a contradiction to the second condition of the lemma.
\end{proof}


\begin{figure}\label{fig: decompo tree}
\begin{center}
\includegraphics[scale=1.1]{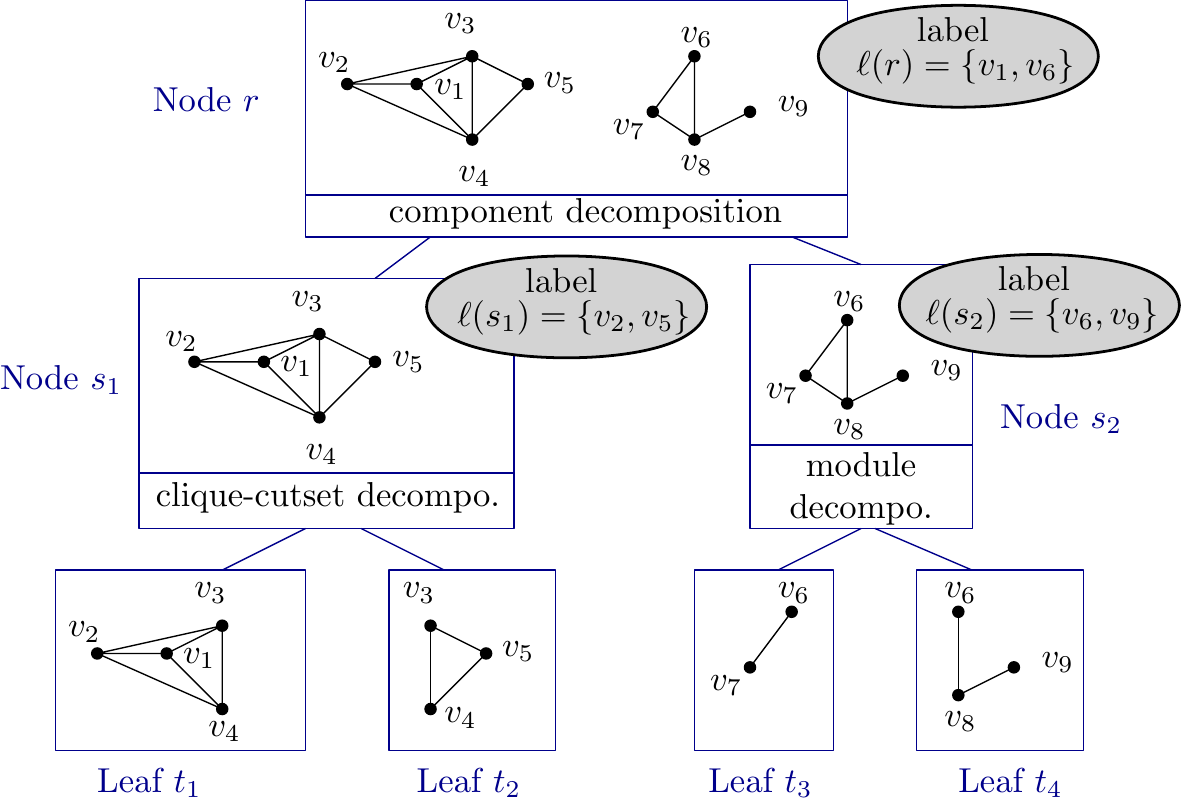}
\end{center}

\caption{Example of a valid decomposition tree $T$ for a graph $G$ on 9 vertices (displayed in the upper box), with leaves in $\mathcal{C}$, where $\mathcal{C}$ denote the union of triangle-free graphs and complete graphs. 
In this example, $T$ has 3 internal nodes and 4 leaves, with 
$\varphi(r)=\{v_1, \ldots, v_9\}$, 
$\varphi(s_1)=\{v_1, \ldots, v_5\}$, 
$\varphi(s_2)=\{v_6, \ldots, v_9\}$, 
$\varphi(t_1)=\{v_1, \ldots, v_4\}$, 
$\varphi(t_2)=\{v_3, \ldots, v_5\}$, 
$\varphi(t_3)=\{v_6,  v_7\}$, and
$\varphi(t_4)=\{v_6, \ldots, v_9\}$.
Moreover, this tree is equipped with a $\mathcal{S}$-labeling $\ell$, where $\mathcal{S}$ denotes the set of all pairs of non-adjacent vertices. Observe that $\ell$ satisfies the condition of Lemma \ref{lem: injective labeling}.
See section \ref{sec:validdec} for details about the valid decompositions that are used.
}
\end{figure}

\section{Valid decompositions}
\label{sec:validdec}

The core of the paper consists in showing that many usual graph
decompositions are actually valid decompositions with respect to the CS-Separation.  

\begin{itemize}
\item 
Suppose that $G$ is not connected.  So $V(G)$ admits a partition
$(A,B)$ where $A$ is anticomplete to $B$ and both $A$ and $B$ are non-empty.  Then $(G[A], G[B])$ is a
\emph{component decomposition} of $G$.  (Note that $G[A]$ and $G[B]$
may not be connected, but this does not matter.)

\item 
Suppose that $\overline{G}$ is not connected.  So $V(G)$ admits a
partition $(A,B)$ where $A$ is complete to $B$, and both $A$ and $B$ are non-empty.  Then $(G[A], G[B])$
is an \emph{anticomponent decomposition} of $G$.


\item 
More generally, suppose that $V(G)$ admits a partition $(A, C, B)$
such that $A$ and $B$ are not empty and $A$ is anticomplete to $B$.
Then $(G[A\cup C], G[B\cup C])$ is a \emph{cutset decomposition} of
$G$.  If $C$ is a clique, we say that it is a \emph{clique-cutset
decomposition}.

\item 
Suppose that $G$ admits a non-trivial module $M$.  Pick any $m\in M$.
Then $(G[M], G[\{m\}\cup (V(G)\setminus M)])$ is a \emph{module
decomposition} of $G$.

\item 
Let $\mathcal{C'}$ be a class of graphs.  Suppose that there is a
vertex $v\in V(G)$ such that $G[N(v)]\in \mathcal{C'}$.  Then
$(G\setminus v, G[N(v)])$ is a \emph{$\mathcal{C'}$-neighborhood
decomposition} (at vertex $v$).

\item 
Similarly, suppose that there exists $v\in V(G)$ such that
$G[V(G)\setminus N[v]]\in \mathcal{C'}$.  Then $(G\setminus v,
G\setminus N[v])$ is a \emph{$\mathcal{C'}$-antineighborhood
decomposition} (at vertex $v$).

\item 
An \emph{amalgam split} of $G$ (see \cite{BF}) is a partition of
$V(G)$ into five sets $B_1, A_1, C, A_2, B_2$ with $A_1, A_2$ both non-empty, $|A_1\cup B_1|\geq 2$, $|A_2\cup B_2|\geq 2$ satisfying the following conditions:
$C$ is a
clique, $C$ is complete to $A_1\cup A_2$, $A_1$ is complete to $A_2$,
$B_1$ is anticomplete to $A_2\cup B_2$, and $B_2$ is anticomplete to
$A_1\cup B_1$.   Pick any $a_1\in A_1$ and $a_2\in A_2$.
Then $(G[B_1\cup A_1\cup C\cup\{a_2\}], G[\{a_1\}\cup C \cup A_2\cup
B_2])$ is an \emph{amalgam decomposition} of $G$.
\end{itemize}

Observe that some of these decompositions were introduced in the
context of perfect graphs, as partial attempts to prove Berge's Strong
Perfect Graph Conjecture by proving that minimal counter-examples to
the conjecture do not admit such a decomposition.  Since it is not
known whether the polynomial CS-Separation property holds in perfect graphs, it
makes sense to follow these steps and to carefully study the
behavior of these decompositions with respect to CS-Separators.

We now prove that each of these decompositions is valid.  It is worth
noting that if $G$ admits a so-called \emph{2-join}, then a valid decomposition can
be found too.  We will not use it in our applications in Section~\ref{sec:applications},
but we refer the interseting reader to \cite{LT} for more details.

\begin{lemma}\label{lem:valid decompo}
Let $G$ be a graph and let $(G_1, G_2)$ be a decomposition of $G$ which is either a component decomposition, or an anticomponent decomposition, or a cutset decomposition, or a module decomposition, or a $\mathcal{C'}$-neighborhood decomposition, or a $\mathcal{C'}$-antineighborhood decomposition, or an
amalgam decomposition. Then this decomposition is valid.
\end{lemma}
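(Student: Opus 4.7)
The plan is to handle each of the seven decomposition types separately by constructing, from a CS-separator $\mathcal{C}_1$ of $G_1$ of size $f_1$ and a CS-separator $\mathcal{C}_2$ of $G_2$ of size $f_2$, two explicit extension maps $\phi_1$ and $\phi_2$ that lift each block-cut to a cut of $G$. This produces a family of $f_1+f_2$ cuts of $G$, and it suffices to verify that for every disjoint clique-stable pair $(K,S)$ of $G$, at least one of these extended cuts separates $K$ from $S$. The unifying combinatorial observation is that each decomposition forces $K$ (and, dually, $S$) to ``live'' inside one of the two blocks, possibly after replacing some vertices by an appropriate representative in the other block.

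I would dispatch the easy cases first. For component and anticomponent decompositions, every clique (resp.\ every stable set) is contained in $A$ or in $B$, so the extension simply appends the other block to the stable-set side (resp.\ to the clique side). The cutset decomposition is similar: since $A$ is anticomplete to $B$, any clique misses $A$ or misses $B$, hence lies in $V(G_1)$ or in $V(G_2)$, and we extend by pushing the unused block to the $W'$-side. For the $\mathcal{C}'$-neighborhood decomposition at $v$, extensions from $G_1=G\setminus v$ place $v$ on the $W'$-side, while extensions from $G_2=G[N(v)]$ place $v$ on the $W$-side and $V\setminus N[v]$ on the $W'$-side; a short case analysis on whether $v\in K$, $v\in S$, or $v\notin K\cup S$ then covers every pair. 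The $\mathcal{C}'$-antineighborhood decomposition is handled symmetrically.

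The module decomposition requires the first real use of the module property: if $K\cap M\neq\emptyset$, then every vertex of $K\setminus M$ is adjacent to some vertex of $M$, hence complete to $M$; and if $S\cap M\neq\emptyset$, then $S\setminus M$ is anticomplete to $M$. Accordingly, I would extend a cut of $\mathcal{C}_1=\mathcal{C}_{G[M]}$ by sending the vertices of $V\setminus M$ complete to $M$ to the $W$-side and those anticomplete to $M$ to the $W'$-side, and extend a cut of $\mathcal{C}_2=\mathcal{C}_{G[\{m\}\cup(V\setminus M)]}$ by placing $M\setminus\{m\}$ on the same side as $m$. Cuts lifted from $\mathcal{C}_1$ handle the case where both $K$ and $S$ meet $M$; cuts lifted from $\mathcal{C}_2$, after collapsing $K\cap M$ or $S\cap M$ to the single representative $m$ when at most one of them meets $M$, handle the remaining cases.

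The amalgam decomposition is the main obstacle, since the five blocks interact in a nontrivial way and the clique $C$ lies in both $G_1$ and $G_2$. The decisive observation is that any clique of $G$ must miss $B_1$ or miss $B_2$ (because $B_1$ is anticomplete to $A_2\cup B_2$ and $B_2$ to $A_1\cup B_1$), while any stable set has restricted intersection with $A_1\cup A_2\cup C$. When $K\cap B_2=\emptyset$, replacing $K\cap A_2$ by $\{a_2\}$ yields a clique $K'$ of $G_1$, and an analogous adjustment of $S$ produces a stable set $S'$ of $G_1$; I would then apply a cut from $\mathcal{C}_1$ separating $K'$ from $S'$ and extend to $G$ by placing $A_2\cup B_2$ on the side dictated by the position of $a_2$. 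The symmetric case $K\cap B_1=\emptyset$ is handled by $\mathcal{C}_2$ and the representative $a_1$. The bulk of the work will be the bookkeeping that verifies, in each sub-case according to which blocks $S$ meets and whether $S\cap C$ is empty or a single vertex, that the lifted cut truly separates $K$ from $S$, while still using only $f_1+f_2$ cuts total.
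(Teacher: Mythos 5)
Your handling of the first six decompositions (component, anticomponent, cutset, module, $\mathcal{C}'$-neighborhood, $\mathcal{C}'$-antineighborhood) is correct and essentially identical to the paper's proof: the same lifted cuts, and in the module case the same collapse of $K\cap M$ or $S\cap M$ to the representative $m$ when at most one of them meets $M$. The amalgam case, however, is exactly where the real difficulty lies, and your plan as written has two concrete defects there.

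First, the extension rule. You lift a cut $(W,W')$ of $\mathcal{C}_1$ ``by placing $A_2\cup B_2$ on the side dictated by the position of $a_2$.'' Keeping $A_2$ and $B_2$ together is wrong: $B_2$ must always go to the stable-set side. The correct rule is $(W\cup A_2,\,W'\cup B_2)$ when $a_2\in W$ and $(W,\,W'\cup A_2\cup B_2)$ when $a_2\in W'$. To see that your version fails, take $K$ meeting $B_1$ (so $K\subseteq B_1\cup A_1\cup C$) and $S$ meeting both $A_1\cup C$ and $B_2$; the chosen cut of $\mathcal{C}_1$ may well have $a_2\in W$, and your lift then puts $S\cap B_2$ on the clique side. (Relatedly, ``replacing $K\cap A_2$ by $\{a_2\}$'' must be suppressed when $K$ meets $B_1$, since $a_2$ is anticomplete to $B_1$ and $K\cup\{a_2\}$ is then not a clique; in that branch the paper instead adjusts $S$, adding $a_2$ to $S_1$ precisely when $S$ misses $A_1\cup C$, which is what forces $a_2\in W'$ and hence selects the lift that sends all of $A_2\cup B_2$ to the stable side.)

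Second, the dispatch rule ``$K\cap B_2=\emptyset\Rightarrow$ use $\mathcal{C}_1$'' cannot be made to work, because the choice of block must also depend on $S$. Every cut lifted from $\mathcal{C}_1$ keeps $A_2$ entirely on one side, since $A_2$ is represented in $G_1$ only by $a_2$. So if $K$ and $S$ both meet $A_2$ (possible, as $A_2$ need be neither a clique nor a stable set), while $K$ misses $B_1\cup B_2$, no cut lifted from $\mathcal{C}_1$ separates them even though your rule routes this pair to $\mathcal{C}_1$; one is forced to use $\mathcal{C}_2$, where $A_2$ survives intact (this is legitimate because $K\cap A_2\neq\emptyset$ forces $K\cap B_1=\emptyset$). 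The paper's case analysis is organized to avoid this trap: first on whether $K$ meets $B_1$ or $B_2$, and then, when $K\subseteq A_1\cup C\cup A_2$, on which of $A_1,A_2$ the stable set meets, choosing the block accordingly. These are precisely the sub-cases your ``bookkeeping'' would have to uncover; as stated, your plan sends some clique--stable pairs to a family that cannot separate them.
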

\begin{proof}
We follow the notations used in the definitions above.  Let
$\mathcal{F}_1$ be a CS-separator of $G_1$ and $\mathcal{F}_2$ be a
CS-separator of $G_2$, of respective sizes $f_1$ and $f_2$.  In each
case, we will define a family $\mathcal{F}$ of cuts of $G$ of size
$f_1+f_2$ and prove that $\mathcal{F}$ is a CS-separator for $G$.
    
\medskip

\noindent
{\it Cutset decomposition}: Here $G$ has a partition $(A,C,B)$
where $A$ and $B$ are not empty and $A$ is anticomplete to $B$, and
$G_1=G[A\cup C]$, and $G_2=G[B\cup C]$.  For each cut $(W, W')\in
\mathcal{F}_1$, we put $(W, W'\cup B)$ in $\mathcal{F}$; and similarly
for each cut $(W, W')\in \mathcal{F}_2$, we put $(W, W'\cup A)$ in
$\mathcal{F}$.  Clearly $\mathcal{F}$ is a set of cuts of $G$.  Let us
check that $\mathcal{F}$ is a CS-Separator of $G$.  Pick any clique
$K$ and any stable set $S$ of $G$, such that $K\cap S=\emptyset$.
Note that $K$ cannot intersect both $A$ and $B$ since $A$ is
anticomplete to $B$.  Up to symmetry, let us assume that $K\cap
B=\emptyset$, so $K \subseteq A\cup C$.  Then there is a cut $(W, W')
\in \mathcal{F}_1$ that separates $K$ and $S \cap (A\cup C)$ (possibly
empty).  Then the cut $(W, W' \cup B)$ separates $K$ and $S$ in $G$
and it is a member of $\mathcal{F}$.

\medskip

\noindent
{\it Component decomposition}: This is similar to a cutset
decomposition with $C$ being empty.

\medskip

\noindent
{\it Anticomponent decomposition}: Since a CS-separator of a graph
$G$ is also a CS-separator of its complement $\overline{G}$ (up to
swapping both sets in the cuts), $\mathcal{F}_1$ and
$\mathcal{F}_2$ also are CS-separators of the complement of 
$G_1$ and $G_2$, respectively.  The first point ensures that the
complement of $G$ has a CS-separator of size $f_1+f_2$, and then so
does $G$.

\medskip

\noindent
{\it Module decomposition}: Here $G$ has a partition $(M, A, B)$
where $A$ is complete to $M$, and $B$ is anticomplete to $M$, and
$G_1=G[M]$, and $G_2=G[\{m\}\cup A\cup B]$ for some $m\in M$.  For each
cut $(W, W')\in \mathcal{F}_1$, we put the cut $(W\cup A, W' \cup B)$
in $\mathcal{F}$.  For each cut $(W, W') \in \mathcal{F}_2$, if $m\in
W$ then we put $(W \cup M, W')$ in $\mathcal{F}$, otherwise we put
$(W, W'\cup M)$ in $\mathcal{F}$.  Let us check that $\mathcal{F}$ is
a CS-Separator of $G$.  Pick any clique $K$ and any stable set $S$ of
$G$, such that $K\cap S=\emptyset$.  First, suppose that $K\subseteq
M\cup A$ and $S\subseteq M\cup B$.  Then there is a cut $(W, W')\in
\mathcal{F}_1$ separating $K\cap M$ and $S\cap M$, and the
corresponding cut $(W\cup A, W'\cup B)\in \mathcal{F}$ separates $K$
and $S$.  So $K\nsubseteq M\cup A$ or $S\nsubseteq M\cup B$.  
This implies that $M$ cannot intersect both $K$ and $S$.
 Let $K_2=K$ if $K\cap M=
\emptyset$ and $(K \setminus M) \cup \{m\}$ otherwise.  Similarly let
$S_2=S$ if $S\cap M= \emptyset$ and $(S \setminus M) \cup \{m\}$
otherwise.  Note that $K_2$ and $S_2$ are respectively a clique and a
stable set in $G_2$.  So there exists a cut $(W,W') \in
\mathcal{F}_2$ that separates $K_2$ and $S_2$.  If $m \in W$, then $(W
\cup M,W')$ is a member of $\mathcal{F}$ that separates $K \cup M$ and
$S$, and consequently $K$ and $S$.  If $m \in W'$, then $(W,W' \cup
M)$ is a member of $\mathcal{F}$ that separates $K$ and $S \cup M$,
and consequently $K$ and $S$.

\medskip

\noindent
{\it $\mathcal{C'}$-neighborhood decomposition}: Here $G$ has a partition $(\{v\},
A, B)$ where $A=N(v)$, and $G[A]\in \mathcal{C}'$, and $G_1 = G[V(G)
\setminus \{v\}]$, and $G_2 = G[N(v)]$.  For each cut $(W, W') \in
\mathcal{F}_1$, we put $(W, W' \cup \{v\})$ in $\mathcal{F}$.  For
each cut $(W, W') \in \mathcal{F}_2$, we put $(W \cup \{v\}, W' \cup
B)$ in $\mathcal{F}$.  Let us check that $\mathcal{F}$ is a
CS-Separator of $G$.  Pick any clique $K$ and any stable set $S$ of
$G$, such that $K\cap S=\emptyset$.  First, suppose that $v \notin K$,
hence $K \subseteq A \cup B$.  Then there is a cut $(W, W') \in
\mathcal{F}_1$ that separates $K$ and $S \cap (A\cup B)$, and
consequently the cut $(W, W' \cup \{v\})$ is a member of $\mathcal{F}$
that separates $K$ and $S$ in $G$.  Now, assume that $v \in K$, hence
$K \subseteq N[v]$ and $S \subseteq A \cup B$.  Then there is a cut
$(W, W') \in \mathcal{F}_2$ that separates $K \cap A$ and $S \cap A$,
and the cut $F = (W \cup \{v\}, W' \cup B)$ is a member of
$\mathcal{F}$ that separates $K$ and $S$ in $G$.
	
\medskip

\noindent
{\it $\mathcal{C'}$-anti-neighborhood decomposition}: the complement of a graph with a $\mathcal{C'}$-antineighborhood
decomposition on $v$ is a graph with a $\overline{\mathcal{C'}}$-neighborhood decomposition on
$v$, so we apply the same construction as above in the complement graph.  
 
\medskip

\noindent
{\it Amalgam decomposition}: Here $G$ has a partition $(B_1, A_1, C,
A_2, B_2)$ where $C$ is a clique, $C$ is complete to $A_1 \cup A_2$,
$A_1$ is complete to $A_2$, $B_1$ is anticomplete to $A_2 \cup B_2$,
and $B_2$ is anticomplete to $A_1 \cup B_1$, and $G_1 = G[A_1 \cup B_1
\cup C \cup \{a_2\}]$, and $G_2 = G[A_2 \cup B_2 \cup C \cup \{a_1\}]$
for some $a_1 \in A_1$ and $a_2 \in A_2$.  For each cut $(W, W') \in
\F_1$, if $a_2 \in W$ we put the cut $(W \cup A_2, W' \cup B_2)$ in
$\F$, otherwise we put $(W, W' \cup A_2 \cup B_2)$ in $\F$.  Similarly, for each
cut $(W, W') \in \F_2$, if $a_1 \in W$ we put the cut $(W \cup A_1, W'
\cup B_1)$ in $\F$, otherwise we put $(W, W' \cup A_1 \cup B_1)$ in
$\F$.  Let us check that $\mathcal{F}$ is a CS-Separator of $G$.  Pick
any clique $K$ and any stable set $S$ of $G$, such that $K\cap
S=\emptyset$.  First, suppose that $K \cap (B_1 \cup B_2) \neq
\emptyset$.  Up to symmetry, we may assume that $K \cap B_1 \neq
\emptyset$ and let $S_1 = S\setminus B_2$ if $S \cap (A_1 \cup C) \neq \emptyset$
and $S_1 = (S \setminus (A_2\cup B_2)) \cup \{a_2\}$ otherwise.  Note that $K$
and $S_1$ are respectively cliques and stable sets in $G_1$.  So there
exists a cut $(W, W') \in \F_1$ that separates $K$ and $S_1$ in $G_1$.
If $S \cap (A_1 \cup C) = \emptyset$, then $a_2 \in S_1$, hence $a_2
\notin W$, so the cut $(W, W' \cup A_2 \cup B_2)$ is a member of $\F$
that separates $K$ and $S$ in $G$.  On the other hand, if $S \cap (A_1
\cup C) \neq \emptyset$, then $S \cap A_2 = \emptyset$, so whether or
not $a_2$ is in $W$, both possible cuts $(W \cup A_2, W' \cup B_2)$
and $(W, W' \cup A_2 \cup B_2)$ separate $K$
and $S$ in $G$, and one of them is a member of $\F$.  We may assume now that $K \cap (B_1 \cup B_2) =
\emptyset$.  First, suppose that $S \cap (A_1 \cup A_2) \neq
\emptyset$, so, up to symmetry, we may assume that $S \cap A_1 \neq
\emptyset$ and let $K_1 = (K \setminus A_2) \cup \{a_2\}$.  Note that
$K_1$ is a clique in $G_1$.  So there exists a cut $(W, W') \in \F_1$
that separates $K_1$ and $S \cap V(G_1)$.  Hence, the cut $(W \cup
A_2, W' \cup B_2)$ is a member of $\F$ that separates $K$ and $S$ in
$G$.  Finally, suppose that $S \cap (A_1 \cup A_2) = \emptyset$ and
let $K_1 = (K \setminus A_2) \cup \{a_2\}$.  Note that $K_1$ is a
clique in $G_1$.  So there exists a cut $(W, W') \in \F_1$ that
separates $K_1$ and $S \cap V(G_1)$.  Hence, the cut $(W \cup A_2, W'
\cup B_2)$ is a member of $\F$ that separates $K$ and $S$ in $G$.
\end{proof}

Observe now that degeneracy can be seen as a kind of decomposition:

\begin{theorem}
Let $s>0$ be a constant and let $G$ be a graph on $n$ vertices admitting an ordering $v_1, \ldots, v_n$ of its vertices such that 
$|N(v_i)\cap\{v_{i+1}, \ldots, v_n\}|\leq n^{s/\sqrt{\log n}}$ for every $i \in \{1, \ldots, n - 1\}$. Then $G$ admits a CS-Separator of size $n^{\mathcal{O}(1)}$.
\end{theorem}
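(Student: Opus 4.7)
The plan is to read the ordering as a degeneracy condition and to apply neighborhood decompositions iteratively along it, using Yannakakis's general $m^{\mathcal{O}(\log m)}$ bound to handle the leaves.

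Set $d := n^{s/\sqrt{\log n}}$ and, for each $i$, write $G_i := G[\{v_i,\ldots,v_n\}]$, so that $G_1 = G$. The decomposition tree $T(G)$ is built by iterating a neighborhood decomposition at $v_1, v_2, \ldots$: at step $i$ I decompose $G_i$ at the vertex $v_i$ into $G_{i+1} = G_i \setminus v_i$, which becomes the next internal node, and the leaf $G_i[N_{G_i}(v_i)] = G[N(v_i) \cap \{v_{i+1},\ldots,v_n\}]$. The process ends at $G_n$, a single-vertex leaf. By Lemma~\ref{lem:valid decompo}, every such neighborhood decomposition is valid, so $T(G)$ is a valid decomposition tree with exactly $n$ leaves, each of size at most $d$ by the ordering hypothesis.

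Yannakakis's bound~\cite{Y} applied at a leaf of size $m \leq d$ produces a CS-separator of size $m^{\mathcal{O}(\log m)} \leq d^{\mathcal{O}(\log d)}$. The central computation is
\[
\log d \;=\; \frac{s}{\sqrt{\log n}} \cdot \log n \;=\; s\sqrt{\log n},
\]
whence $d^{\log d} = 2^{(\log d)^2} = 2^{s^2 \log n} = n^{s^2}$. Since $s$ is constant, every leaf therefore admits a CS-separator of size $n^{\mathcal{O}(1)}$. Combining these bottom-up through $T(G)$, exactly as in the proof of Lemma~\ref{lem: CS-Sep from poly valid decompo tree}, yields a CS-separator for $G$ of total size at most $n \cdot n^{\mathcal{O}(1)} = n^{\mathcal{O}(1)}$.

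The main thing to watch is the interplay between the leaf size and Yannakakis's polylogarithmic overhead: the threshold $n^{s/\sqrt{\log n}}$ is tailored precisely so that $(\log d)^2 = \mathcal{O}(\log n)$, which is exactly what keeps $d^{\mathcal{O}(\log d)}$ polynomial in $n$. Any asymptotically larger degeneracy bound would break this calibration, so no combinatorial trick beyond the right choice of $d$ seems required here; strictly speaking one should also note that Lemma~\ref{lem: CS-Sep from poly valid decompo tree} is not applied as a black box, because the class of graphs on at most $d(n)$ vertices depends on $n$, but the same bottom-up summation on $T(G)$ goes through verbatim.
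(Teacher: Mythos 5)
Your proof is correct and follows essentially the same route as the paper's: a chain of neighborhood decompositions along the degeneracy ordering, with the leaves (of size at most $n^{s/\sqrt{\log n}}$) handled by Yannakakis's quasipolynomial bound, and the identical calibration $(\log d)^2 = s^2\log n$ making each leaf's separator polynomial in $n$. The final summation over the linearly many nodes of the caterpillar tree matches the paper's invocation of Lemma~\ref{lem: CS-Sep from poly valid decompo tree}, and your closing remark about the class depending on $n$ is a fair observation that the paper glosses over in the same way.
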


\begin{proof}
Let $\mathcal{C'}$ be the class of all graphs of size at most $n^{s/\sqrt{\log n}}$. By \cite{Y}, every graph with $m$ vertices admits a CS-Separator of size $m^{\mathcal{O}(\log m)}$, i.e. of size at most $m^{c\log m}$ for some constant $c$. Hence every graph  $H\in\mathcal{C'}$ admits a CS-Separator of size 

$$\left(n^{s/\sqrt{\log n}}\right)^{c\log\left(n^{s/\sqrt{\log n}}\right)}=n^{cs^2} \qquad
\text{where } n \text{ still denotes }|V(G)| \ .$$
Now we recursively decompose $G[v_i, \ldots, v_n]$ with a $\mathcal{C'}$-neighborhood decomposition at vertex $v_i$, namely $(G[\{v_{i+1}, \ldots , v_n\}], N(v_i) \cap\{v_{i+1}, \ldots, v_n\})$, which is possible by degeneracy hypothesis. We obtain a valid decomposition tree $T$ for $G$ with leaves in $\mathcal{C'}$, such that the right son of every internal node of $T$ is a leaf, and the height of $T$ is $n-1$. This proves that $|V(T)|$ is linear in $n$. By Lemma \ref{lem: CS-Sep from poly valid decompo tree}, $G$ admits a CS-Separator of size $\mathcal{O}(n^{cs^2+1})$ which is a $n^{\mathcal{O}(1)}$.
\end{proof}

We finish this section by studying two types of situations that are
not really, or not at all, decompositions but nevertheless are nice to
encounter when one tries to obtain polynomial-size CS-separators.

\begin{theorem}\label{thm:prime-atoms-nearly-Cp}
Let $\mathcal{C}$ be a hereditary class of graphs and $c\geq 1$.
Assume that $\mathcal{C}'$ is a class of graphs 
having the $\mathcal{O}(n^c)$-CS-Separation property.
 If every
prime atom of $\mathcal{C}$ is nearly-$\mathcal{C}'$, then 
$\mathcal{C}$ has the $\mathcal{O}(n^{c+3})$-CS-Separation property.
\end{theorem}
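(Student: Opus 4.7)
The plan is, for each $G \in \mathcal{C}$ on $n$ vertices, to build a valid decomposition tree $T$ whose leaves lie in $\mathcal{C}'$ (after possibly enlarging $\mathcal{C}'$ by graphs on at most two vertices, which trivially have the $\mathcal{O}(n^c)$-CS-Separation property), to show $|L(T)| = \mathcal{O}(n^3)$, and then to invoke Lemma~\ref{lem: CS-Sep from poly valid decompo tree} to obtain a CS-separator of size $\mathcal{O}(n^{c+3})$. To build $T$, at each internal node $t$ with subgraph $H = G[\varphi(t)]$ on at least three vertices I apply the first applicable rule from the following prioritized list, each of which is a valid decomposition by Lemma~\ref{lem:valid decompo}: (i) a component decomposition if $H$ is disconnected; (ii) an anticomponent decomposition if $\overline{H}$ is disconnected; (iii) a clique-cutset decomposition if $H$ has a clique-cutset; (iv) a module decomposition if $H$ has a non-trivial module; and (v) otherwise $H$ is a prime atom, hence (by heredity of $\mathcal{C}$ and the hypothesis) nearly-$\mathcal{C}'$, and I use a $\mathcal{C}'$-antineighborhood decomposition at an arbitrary $v \in V(H)$. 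The leaf child $H \setminus N[v]$ is in $\mathcal{C}'$, and recursion continues on $H \setminus v$.

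For the size of $T$, I will use Lemma~\ref{lem: injective labeling} with $\mathcal{S}$ the family of all non-edges of $G$ together with all triples of $V(G)$ containing at least one non-edge, so $|\mathcal{S}| = \mathcal{O}(n^3)$. The label $\ell(t)$ is chosen according to the rule applied at $t$: for (i) or (iii), $\ell(t) = \{a, b\}$ with $a \in A$, $b \in B$ (a non-edge, since the sides are anticomplete); for (ii), $\ell(t) = \{a, a', b\}$ with $a, a' \in A$ such that $aa'$ is a non-edge (existing because an anticomponent of size at least two is anti-connected in $H$ and hence not a clique) and $b \in B$, with the symmetric choice when $|A| = 1$; for (iv), $\ell(t) = \{x, y\}$ with $x \in M \setminus \{m\}$ (available since $|M| \geq 2$) and $y$ an outside vertex anticomplete to $M$ (existing because rule~(ii) having failed rules out series modules); for (v), $\ell(t) = \{v, x\}$ with $x \in V(H) \setminus N[v]$, which is non-empty because a prime atom on at least three vertices has no universal vertex (else $V(H) \setminus \{v\}$ would be a non-trivial module). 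Condition~1 of Lemma~\ref{lem: injective labeling} is immediate from these choices. For condition~2, the intersection $\varphi(s) \cap \varphi(s')$ is respectively a clique in (iii), empty in (i) and (ii), a single vertex in (iv), or $V(H) \setminus N[v]$ in (v); in the first three cases the intersection contains no non-edge and no non-triangle triple, and in (v) the $\mathcal{C}'$-antineighborhood child is already a leaf, so condition~2 is vacuous. Injectivity of $\ell$ then gives $|I(T)| \leq |\mathcal{S}| = \mathcal{O}(n^3)$ and hence $|L(T)| = \mathcal{O}(n^3)$, and Lemma~\ref{lem: CS-Sep from poly valid decompo tree} delivers a CS-separator of size $\mathcal{O}(n^3 \cdot n^c) = \mathcal{O}(n^{c+3})$.

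The part I expect to require the most care is verifying the structural facts that make the labels well defined in every case: that after rules~(i) and~(ii) have been unsuccessfully tried, every non-trivial module is of mixed type (providing the anticomplete outside vertex needed in (iv)); that each anticomponent of size at least two is anti-connected and therefore not a clique (providing the internal non-edge needed in (ii)); and that no prime atom on at least three vertices admits a universal vertex (providing the vertex in $V(H) \setminus N[v]$ needed in (v)). Once these facts are cleanly in place, handling the trivial base cases $|V(H)| \le 2$ is only a matter of absorbing a constant number of cuts per small leaf into the polynomial bound.
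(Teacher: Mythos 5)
Your overall strategy is the same as the paper's: build a valid decomposition tree using component, anticomponent, clique-cutset, module and $\mathcal{C}'$-antineighborhood decompositions, label the internal nodes injectively by ``trios'' (sets of at most three vertices containing a non-edge) via Lemma~\ref{lem: injective labeling}, and conclude with Lemma~\ref{lem: CS-Sep from poly valid decompo tree}. The reordering of the clique-cutset and module rules is harmless, and your verifications of Conditions~1 and~2 for rules (i), (iii), (iv), (v) are sound.

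There is, however, one genuine gap: you never make complete graphs into leaves, and your rule~(ii) breaks down exactly on cliques. If at some node the subgraph $H=G[\varphi(t)]$ is a clique on at least three vertices not belonging to $\mathcal{C}'$, then rules (i) fails, rule (ii) applies (the complement of a clique is disconnected), but \emph{every} anticomponent of $H$ is a singleton, so neither side of the anticomponent decomposition contains a non-edge and the label $\{a,a',b\}$ does not exist -- your justification (``an anticomponent of size at least two is anti-connected, hence not a clique'') covers only the case where some anticomponent has size at least two. Such clique nodes do arise (the root itself may be a clique, and so may $G[M]$ in a module decomposition or a block of a clique-cutset decomposition), and since they carry no label, the inequality $|I(T)|\leq|\mathcal{S}|$ is no longer established, so your $\mathcal{O}(n^3)$ bound on the tree is not proved. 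The fix is exactly what the paper does: declare every complete graph to be a leaf (a clique on $m$ vertices has a CS-separator of size $\mathcal{O}(m)$, which is absorbed into the $\mathcal{O}(n^c)$ leaf bound since $c\geq 1$); then in rule (ii) the node is not a clique, a non-edge exists in one of the two sides, and the rest of your argument goes through unchanged.
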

\begin{proof}
We follow the method introduced in Section~\ref{sec:tree} by
recursively decomposing every member $G$ of $\mathcal{C}$ along
components, anticomponents, modules, clique-cutset, or
$\mathcal{C'}$-antineighborhood until reaching graphs of
$\mathcal{C}'$ or cliques.

Let $\mathcal{K}$ be the class of complete graphs.  For every
$G\in\mathcal{C}$, we recursively define a valid decomposition tree
$T(G)$ with leaves in $\mathcal{C'}\cup \mathcal{K}$ as follows.  If
$G\in\mathcal{C}'\cup \mathcal{K}$, then the root $r$ is the only node
of $T(G)$.  Otherwise, we use a valid decomposition $(G_1, G_2)$ of
$G$ as described below and $T(G)$ is obtained from $T(G_1)$ and
$T(G_2)$ by connecting the root $r$ to the respective roots of
$T(G_1)$ and $T(G_2)$.  Moreover the map $\varphi:V(T)\to
\mathcal{P}(V(G))$ is naturally obtained from the maps $\varphi_1$ and
$\varphi_2$ associated to $T(G_1)$ and $T(G_2)$ respectively, by
setting $\varphi(t)=\varphi_i(t)$ if $t\in V(T(G_i))$ for $i=1, 2$,
and $\varphi(r)=V(G)$.  We proceed as follows:

\begin{itemize}
\item  
If $G$ is not connected or not anticonnected, we use a component or
anticomponent decomposition.  
\item 
Otherwise, if $G$ has a non-trivial module $M$ we use the module
decomposition.  
\item 
Otherwise, if $G$ has a clique-cutset, we use a clique-cutset
decomposition.  
\item 
Finally if $G$ is nearly $\mathcal{C}'$, then we use the
$\mathcal{C'}$-antineighborhood decomposition.
\end{itemize}

Observe that, since prime atoms of $\mathcal{C}$ are nearly
$\mathcal{C}'$, then every $G\in \mathcal{C}$ satisfies at least one
condition, hence $T(G)$ is well-defined (but maybe not unique - this
does not matter).

We now want to define an injective $\mathcal{S}$-labeling of $T(G)$,
for some well-chosen $\mathcal{S}$.  A \emph{trio} of $G$ is a subset
$X\subseteq V(G)$ of at most three vertices, containing a non-edge.
Let $\mathcal{S}$ be the set of trios of $G$.  Clearly
$|\mathcal{S}|\le |V(G)|^3$.

Once again, we distinguish cases depending on the rule that was used
to decompose each internal node $t$.
\begin{itemize}
\item 
Rule 1: $G[\varphi(t)]$ is not connected, i.e. $\varphi(t)=V_1\uplus
V_2$ with $V_1$ anticomplete to $V_2$.  Let $v_i$ be any vertex of
$V_i$, for $i=1,2$ and define $\ell(t)=\{v_1, v_2\}$.
\item 
Rule 2: $G[\varphi(t)]$ is not anticonnected, i.e., $\varphi(t)=
V_1\uplus V_2$ with $V_1$ complete to $V_2$.  Since $t$ is not a
clique (otherwise, $t$ would be a leaf), there exists a non-edge $uv$
in, say, $V_1$.  Let $v_2$ be any vertex of $V_2$ and define
$\ell(t)=\{u, v, v_2\}$.
\item 
Rule 3: $G[\varphi(t)]$ has a non-trivial module $M$.  Let $u,v$ be
two vertices of $M$ and $x$ be a common non-neighbor of $u$ and $v$
(which exists otherwise Rule 2 applies).  We define $\ell(t)=\{u, v,
x\}$.
\item 
Rule 4: $G[\varphi(t)]$ has a partition $(V_1, K, V_2)$ where $K$ is a
clique-cutset that separates $V_1$ from $V_2$.  Let $v_i$ be any
vertex of $V_i$, for $i=1,2$ and define $\ell(t)=\{v_1, v_2\}$.
\item 
Rule 5: $G[\varphi(t)]$ has a $\mathcal{C}'$-antineighborhood
decomposition at vertex $x$: let $v$ be any non-neighbor of $x$ (which
exists, otherwise Rule 2 applies) and define $\ell(t)=\{x,v\}$.
\end{itemize}

For Rules 1 to 5, we have that $\ell(t) \ns \varphi(s)$ and $\ell(t)
\ns \varphi(s')$ where $s, s'$ are the children of $t$.  Moreover, for
Rules 1 up to 3, $\varphi(s)$ and $\varphi(s')$ intersect on at most
one vertex, and for Rule 4, $\varphi(s)\cap \varphi(s')$ is a clique.
Finally for Rule 5, the node $s'$ is a leaf.  Hence $\varphi(s)\cap
\varphi(s')$ does not contain any trio, unless $s'$ is a leaf.  Hence
$\ell$ satisfies both conditions of Lemma \ref{lem: injective
labeling}, consequently $\ell$ is injective.  This implies that
$|V(T(G))|$ is a $\mathcal{O}(|V(G)|^3)$.  By Lemma \ref{lem:valid
decompo}, every decomposition used above is valid so $T(G)$ is a valid
decomposition tree with leaves in $\mathcal{C}'\cup \mathcal{K}$.
Every graph $H$ of $\mathcal{C}'$ admits a CS-Separator of size
$\mathcal{O}(|V(H)|^c)$ by assumption, and every graph $H$ of
$\mathcal{K}$ is a clique so admits a linear CS-Separator.  According
to Lemma \ref{lem: CS-Sep from poly valid decompo tree}, $G\in
\mathcal{C}$ admits a CS-Separator of size
$\mathcal{O}(|V(G)|^{c+3})$.  This concludes the proof.
\end{proof}

We say that a class of graph $\C$ \emph{is at distance} $f(n)$ of a
class $\C'$ if for every member $G$ of $\C$ with $|V(G)| = n$ there
exists a set $D \subseteq V(G)$ such that $|D| \leq f(n)$ and $G
\setminus D \in \C'$.

\begin{lemma}\label{lem:distance}
Let $\C'$ be a class of graphs having the $\mathcal{O}(n^{c'})$-CS-Separation property,     and 
    let $\C$ be a class of graphs at distance $c \cdot \log(n)$ of $\C'$,
   for some positive constants $c,c'$.
    Then, $\C$ has the $O(n^{c'+c})$-CS-Separation property.
\end{lemma}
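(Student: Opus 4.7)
The plan is to reduce the CS-separation problem on $G$ to that of the ``almost-member'' $G\setminus D$ of $\C'$, paying only a factor of $2^{|D|}$ which is polynomial because $|D|\le c\log n$. So fix $G\in\C$ with $|V(G)|=n$, and let $D\subseteq V(G)$ with $|D|\le c\log n$ such that $G\setminus D\in \C'$. Let $\F'$ be a CS-separator of $G\setminus D$ of size at most $\kappa\, n^{c'}$ (for some constant $\kappa$), guaranteed by the hypothesis on $\C'$.

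Next I would build $\F$ as follows: for every cut $(W,W')\in \F'$ and every subset $A\subseteq D$, put the cut $(W\cup A,\; W'\cup (D\setminus A))$ into $\F$. This is a partition of $V(G)$ because $W\cup W' = V(G)\setminus D$ and the four pieces $W, A, W', D\setminus A$ are pairwise disjoint. The total size is
\[
|\F| \le |\F'| \cdot 2^{|D|} \le \kappa\, n^{c'} \cdot 2^{c\log n} = \mathcal{O}(n^{c'+c}),
\]
which matches the desired bound (with the understanding that $\log$ is base $2$, otherwise the constant $c$ is simply absorbed).

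Finally I would verify that $\F$ is a CS-separator. Let $K$ be a clique and $S$ a stable set of $G$ with $K\cap S=\emptyset$. Set $A:=K\cap D$. Then $K\setminus A\subseteq V(G)\setminus D$ is a clique of $G\setminus D$ and $S\setminus D\subseteq V(G)\setminus D$ is a stable set of $G\setminus D$, and they are disjoint. Since $\F'$ is a CS-separator of $G\setminus D$, there exists $(W,W')\in \F'$ with $K\setminus A\subseteq W$ and $S\setminus D\subseteq W'$. The corresponding cut $(W\cup A,\, W'\cup (D\setminus A))\in \F$ then separates $K$ and $S$: the inclusion $K\subseteq W\cup A$ is immediate, and $S\cap D\subseteq D\setminus A$ holds because $A\subseteq K$ is disjoint from $S$, so $S\subseteq W'\cup (D\setminus A)$.

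There is no real obstacle here; the argument is essentially a brute-force enumeration of the ``$D$-signature'' $A=K\cap D$ of the clique, which has only $2^{|D|}=n^{O(1)}$ possibilities. The only point that requires a brief justification, rather than just bookkeeping, is the observation that once $A=K\cap D$ is fixed, the stable-set side of $D$ is forced to lie in $D\setminus A$, which is exactly what allows a single cut of $G\setminus D$ together with the bipartition $(A,D\setminus A)$ of $D$ to separate $K$ from $S$ in $G$.
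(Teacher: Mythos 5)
Your proof is correct and follows exactly the same route as the paper: enumerate all $2^{|D|}=n^{O(1)}$ bipartitions of the exceptional set $D$ and combine each with every cut of a CS-separator of $G\setminus D$, then select the cut corresponding to $A=K\cap D$. No differences worth noting.
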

\begin{proof}
Let $G$ be a graph in $\C$ with $|V(G)| = n$ and $D \subseteq V(G)$ be a
set of at most $c \log(n)$ vertices such that $G \setminus D$
is in $\C'$.  Let $G' = G \setminus D$ be the subgraph induced by
$V(G) \setminus D$.  Since $G'$ is in $\C'$, it admits a CS-Separator
of  size $O(n^{c'})$ that separates any disjoint clique and
stable set in $G'$.  We build the family $\F$ of cuts  of $G$ as
follows.  For each cut $F' = (W, W')$ in $\F'$ and for each subset of
vertices $X \subseteq D$, add the cut $F = (W \cup X, W' \cup (D
\setminus X))$ to $\F$.  Let us check that $\F$ is a CS-separator of
$G$: for every clique $K$ and every stable set $S$ disjoint from $K$
in $G$, there is a cut in $\F' = (W, W')$ that separates $K \cap
V(G')$ and $S \cap V(G')$, furthermore,
by choosing $X=K\cap D$, the corresponding cut $F = (W \cup X, W'
\cup (D \setminus X))$ of $\F$ separates $K$ and $S$ in $G$.  The
family $\F$ of cuts  is of size $O(n^{c'} 2^{c \log n}) = O(n^{c' +
c})$.
\end{proof}

\section{Applications}
\label{sec:applications}

Let us now apply the method exposed above to prove that the
polynomial CS-Separation property holds in various classes of graphs.

\subsection{Apple-free graphs}
\label{subsec:apple-free}

An \emph{apple} $A_k$ is the graph obtained from a chordless cycle
$C_k$ of length at least $4$ by adding a vertex having exactly one
neighbor on the cycle (see Figure~\ref{fig:a4}).  A graph is
\emph{apple-free} if it does not contain any $A_k$ for $k\geq 4$ as an
induced subgraph.  This class of graphs was introduced in \cite{O}
under the name \emph{pan-free graphs} as a generalization of claw-free
graphs.  Brandst\"adt et al.~\cite{BLM} provide an in-depth structural
study of apple-free graphs in order to design a polynomial-time
algorithm for solving the maximum weighted stable set problem in this
class.
 
The polynomial CS-Separation property holds both for claw-free graphs and
chordal graphs (and even for any $C_4$-free graphs), so it seems interesting to
study the CS-Separation in the class of apple-free graphs, which generalizes
both these classes.  Moreover, it is not known whether the polynomial
CS-Separation property holds in perfect graphs.  Hence apple-free graphs appear
as a successful attempt to forbid cycle-like structures, especially if the
method seems to provide some insight on how to tackle the problem in further
classes. We can prove the following:

\begin{theorem}\label{thm:main}
The class of apple-free graphs has the polynomial CS-Separation property.
\end{theorem}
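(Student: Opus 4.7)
The plan is to apply Theorem \ref{thm:prime-atoms-nearly-Cp} to the hereditary class $\mathcal{C}$ of apple-free graphs. To do this, I would identify a class $\mathcal{C}'$ with the $\mathcal{O}(n^{c})$-CS-Separation property for some constant $c$, and then establish that every prime atom of an apple-free graph is nearly-$\mathcal{C}'$. Once this is done, Theorem \ref{thm:prime-atoms-nearly-Cp} immediately yields an $\mathcal{O}(n^{c+3})$-CS-Separator for every apple-free graph.

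The key ingredient is the structural decomposition theorem for apple-free graphs due to Brandst\"adt, Lozin, and Mosca \cite{BLM}, which gives a precise description of prime apple-free atoms. Informally, apple-freeness strongly restricts how $N(v)$ can interact with $V(G)\setminus N[v]$: given any hole $H$ in $G\setminus N[v]$, take a shortest path from $v$ to $H$; the vertex just before $H$ cannot be adjacent to exactly one vertex of $H$, otherwise an apple appears. Combining this ``attachment'' restriction with primality and atomicity, I would argue that the anti-neighborhood $G\setminus N[v]$ of every vertex $v$ of a prime apple-free atom lies in a suitable class $\mathcal{C}'$ already known to admit polynomial CS-separators, drawing on the basic classes output by the Brandst\"adt--Lozin--Mosca theorem (typically subclasses of claw-free or chordal graphs, for which polynomial CS-separation is known from the literature cited in the introduction).

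Once the near-$\mathcal{C}'$ property is in hand, the proof concludes with a direct application of Theorem \ref{thm:prime-atoms-nearly-Cp}, since apple-free graphs form a hereditary class and the decomposition rules in Lemma \ref{lem:valid decompo} (components, anticomponents, modules, clique-cutsets, and $\mathcal{C}'$-antineighborhood) are all valid. The main obstacle is matching the conclusion of the Brandst\"adt--Lozin--Mosca structural theorem with the exact hypothesis of Theorem \ref{thm:prime-atoms-nearly-Cp}: one has to check that every ``basic'' case produced by their decomposition actually satisfies the near-$\mathcal{C}'$ condition, and handle separately the few exceptional structures they list. For exceptional structures that are not literally nearly-$\mathcal{C}'$ but lie only a small distance away from $\mathcal{C}'$, Lemma \ref{lem:distance} can provide the necessary slack; more recalcitrant cases will require a direct ad-hoc construction of a polynomial-size CS-Separator, which is where I expect the bulk of the technical work to lie.
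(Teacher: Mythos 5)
Your high-level strategy --- combine Theorem~\ref{thm:prime-atoms-nearly-Cp} with the Brandst\"adt--Lozin--Mosca structure theory for apple-free graphs --- is exactly the one the paper uses, but there is a genuine gap between your plan and a proof: you propose a \emph{single} application of Theorem~\ref{thm:prime-atoms-nearly-Cp}, with one class $\mathcal{C}'$ such that every prime apple-free atom is nearly-$\mathcal{C}'$. No such single class is supplied by the structure theory, and the informal ``attachment'' argument you sketch does not produce one. What actually works is a \emph{cascade} of applications of Theorem~\ref{thm:prime-atoms-nearly-Cp}, where the class $\mathcal{C}'$ at each stage is the class whose polynomial CS-Separation was established at the previous stage. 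Concretely, the chain is: prime $(A_4,C_5,\ck{6})$-free atoms are nearly chordal, giving $\mathcal{O}(n^4)$ for $(A_4,\ck{5})$-free graphs; prime $(A_4,A_5,\ck{6})$-free atoms are nearly $C_5$-free, hence nearly $(A_4,\ck{5})$-free, giving $\mathcal{O}(n^7)$; then prime $(A_4,A_5,A_6,\ck{7})$-free atoms are nearly $(D_6,E_6)$-free, and $(A_4,A_5,A_6,D_6,E_6,\ck{7})$-free graphs are each either $C_6$-free (handled by the previous stage) or claw-free, giving $\mathcal{O}(n^{10})$.

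The second missing ingredient is the top-level (and intermediate) dichotomy with claw-free graphs: every apple-free graph is either claw-free or $(A_4,A_5,A_6,\ck{7})$-free, and the claw-free case is \emph{not} handled by the decomposition machinery at all --- it is discharged by the separate result of~\cite{BLT} on $H$-free graphs for $H$ a split graph. Without splitting off claw-free graphs first, no nearly-$\mathcal{C}'$ statement is available. Finally, your fallback plans are off-target: neither Lemma~\ref{lem:distance} nor any ad-hoc construction for ``exceptional structures'' is needed; once the five structural facts of~\cite{BLM} are invoked in the right order, the proof is a clean fourfold bootstrap of Theorem~\ref{thm:prime-atoms-nearly-Cp} with no residual cases.
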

%
%
Let $A_4$, $A_5$, $D_6$ and $E_6$ be the graphs described in
Figure~\ref{fig:a4}.  For fixed $p \geq 3$, let $\ck{p}$ be the set of
chordless cycles on at least $p$ vertices.  Here are properties from
\cite{BLM} that we will use.
\begin{theorem}[\cite{BLM}]\label{th: structure apple-free}
    \
\begin{enumerate}[label=(\roman*)]
\item\label{item:1}
Every apple-free graph is either $(\ck{7})$-free or claw-free.

\item\label{item:2}
Every prime $(A_4, A_5, A_6, \ck{7})$-free atom is nearly $D_6$- and
$E_6$-free.

\item\label{item:3}
Every $(A_4, A_5, A_6, D_6, E_6, \ck{7})$-free graph is either
$C_6$-free or claw-free.

\item\label{item:4}
Every prime $(A_4, A_5, \ck{6})$-free atom is nearly $C_5$-free.

\item\label{item:5}
Every prime $(A_4, C_5, \ck{6})$-free atom is nearly chordal.
\end{enumerate}
\end{theorem}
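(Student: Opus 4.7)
The plan is to treat the five items together, since they all have the same flavor: each asserts that imposing some subset of the apple/cycle-forbidden list, possibly together with primality and atomicity, restricts the global structure of $G$ sharply. I would prove each statement by contradiction, starting from a minimum ``obstruction'' --- a forbidden graph whose presence the item denies --- and building an apple $A_k$ or an otherwise forbidden short cycle from it, possibly routing through a shortest path that connects the obstruction to the rest of $G$.

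For items \ref{item:1} and \ref{item:3} the obstruction pattern is the same: $G$ simultaneously contains a claw with center $c$ and an induced cycle $C$ (of length $\ge 7$ for \ref{item:1}; of length exactly $6$ for \ref{item:3}, where additionally $A_4,A_5,A_6,D_6,E_6$ and $\ck{7}$ are forbidden). I would classify the possible adjacency patterns of $c$ on $C$. If $c$ has no neighbor on $C$, a leaf $a$ of the claw meets $C$ in at most one vertex (else $a$ sees a chord), and in that case $\{a\}\cup C$ contains an apple $A_{|C|}$. If $c$ has two or more neighbors on $C$, chasing a chord-free sub-arc yields either an apple $A_k$ with $k\in\{4,5,6\}$ or a $C_k$ with $k\ge 7$. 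The remaining case, where $c$ has exactly one neighbor on $C$, is handled by re-examining the three claw-leaves relative to $C$: at least one can be grafted onto $C$ to realize an explicit apple. For \ref{item:3}, since $|C|=6$ the analysis is finite; the pieces $D_6,E_6$ are precisely the obstructions that one would otherwise produce, so forbidding them collapses all cases.

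For items \ref{item:2}, \ref{item:4}, and \ref{item:5} the pattern is: in a prime atom $G$ satisfying the stated forbidden list, fix a vertex $v$ and assume that $G\setminus N[v]$ contains a forbidden subgraph $H$ (a $D_6$ or $E_6$; a $C_5$; or a chordless cycle of length $\ge 4$, respectively). Choose $H$ and $v$ so that a shortest path $P$ from $v$ to $H$ is of minimum length. Primality forbids non-trivial modules, which forces the vertices on $P$ to be pairwise distinguished with respect to $H$; atomicity forbids clique-cutsets, so $N(v)$ cannot cleanly separate $v$ from $H$, forcing a second chord-free route. A short case analysis on how the penultimate vertex of $P$ attaches to $H$ then produces either a forbidden apple $A_4,A_5,A_6$ or a forbidden cycle from $\ck{6}$ or $\ck{7}$, contradicting the hypothesis of the corresponding item.

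The main obstacle in every part is the combinatorial explosion of the case analysis. The number of attachment patterns of a single vertex to a $C_6$, a $D_6$, or an $E_6$ is large, and each pattern must be checked to produce an apple or a long cycle. In practice the proof in \cite{BLM} is organized around a sequence of preliminary lemmas that tightly restrict the possible adjacencies of a vertex to a chordless cycle and to the small named graphs $D_6,E_6$; I would reprove those attachment lemmas first and only then tackle \ref{item:2}, \ref{item:4}, \ref{item:5}, at which point each item reduces to a finite verification. Because the structural content of these five statements is precisely what \cite{BLM} is devoted to, I would keep to this outline and defer the detailed verification to that source.
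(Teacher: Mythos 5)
This theorem is not proved in the paper at all: it is reproduced from \cite{BLM} and used as a black box, so ``defer the detailed verification to that source'' is, in the end, exactly what the authors themselves do, and as a citation your write-up is consistent with the paper. Judged as an independent proof sketch, however, it has concrete gaps. The clearest one is in your treatment of items (i) and (iii): in the case where the claw center $c$ has no neighbor on the hole $C$, you claim that a leaf $a$ together with $C$ yields an apple, but if $a$ also has no neighbor on $C$ there is no apple at all (and ``else $a$ sees a chord'' is not a valid deduction, since an edge from $a$ to $C$ is not a chord of $C$). In fact no purely local argument of the kind you outline can close this case: the disjoint union of a claw and a $C_7$ is apple-free yet contains both a claw and a member of $\ck{7}$, and similarly a claw plus a disjoint $C_6$ defeats the literal reading of item (iii). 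So the underlying results of \cite{BLM} are established under a connectedness/primality-type hypothesis (or are applied in a context where such a hypothesis is available), and any complete proof of (i) and (iii) must invoke it, e.g.\ by routing a path from the claw to the hole and extracting an apple from the attachment of that path --- something your sketch never does.

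For items (ii), (iv) and (v) the proposal is only a gesture: ``primality forces the vertices of $P$ to be distinguished'' and ``atomicity forces a second chord-free route'' name the right kind of ingredients, but the substance of \cite{BLM} is a long sequence of attachment lemmas (how a single vertex can attach to a hole, to $D_6$, to $E_6$ under the various forbidden-subgraph hypotheses) followed by a large finite case analysis, none of which is carried out or even enumerated here, so nothing can be checked. In short: within this paper the statement is an external result and citing \cite{BLM} is the intended ``proof''; your added outline is incorrect as written in the anticomplete case of (i)/(iii) and too incomplete in (ii), (iv), (v) to stand as an independent argument.
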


\begin{figure}[ht]
    \begin{center}
        \includegraphics[scale=1.0]{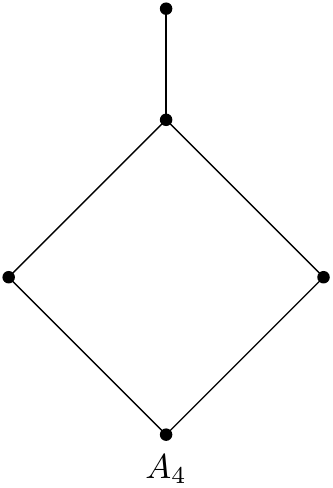}
        \hspace{1em}
        \includegraphics[scale=1.0]{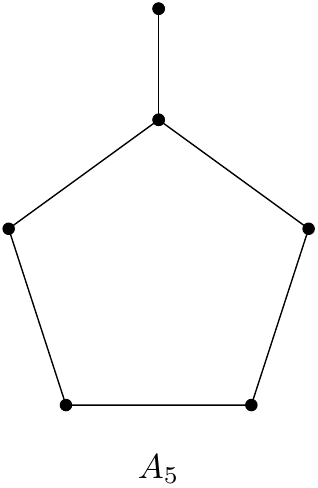}
        \hspace{1em}
        \includegraphics[scale=1.0]{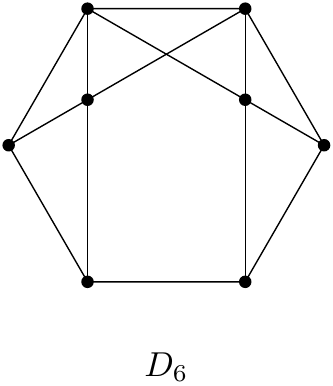}
        \hspace{1em}
        \includegraphics[scale=1.0]{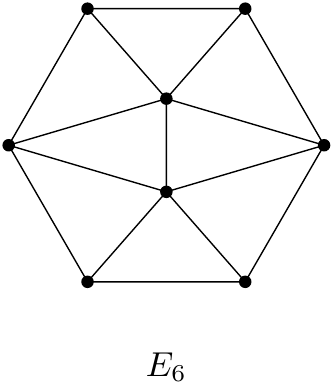}
    \end{center}
    \caption{From left to right: $A_4$, $A_5$, $D_6$ and $E_6$.}
    \label{fig:a4}
\end{figure}

%
\begin{proof}[Proof of Theorem \ref{thm:main}]
We repeatedly use Theorem~\ref{thm:prime-atoms-nearly-Cp} for each
item of Theorem~\ref{th: structure apple-free}.

By~\ref{item:5}, prime $(A_4, \ck{5})$-free atoms are nearly chordal.
Moreover chordal graphs have the linear-CS-Separation property
 (because a
chordal graph has linearly many maximal cliques), hence by
Theorem~\ref{thm:prime-atoms-nearly-Cp}, every $(A_4, \ck{5})$-free
graph on $n$ vertices admits a CS-Separator of size
$\mathcal{O}(n^{4})$.

Now, by~\ref{item:4}, prime $(A_4, A_5, \ck{6})$-free atoms are nearly
$C_5$-free, hence nearly $(A_4, \ck{5})$-free.  By
Theorem~\ref{thm:prime-atoms-nearly-Cp}, we obtain that every $(A_4,
A_5, \ck{6})$-free graph on $n$ vertices admits a CS-separator of size
$\mathcal{O}(n^7)$.  

By~\ref{item:3}, every $(A_4, A_5, A_6, D_6, E_6, \ck{7})$-free graph
$G$ is either (a) $C_6$-free, and in that case $G$ is $(A_4, A_5,
\ck{6})$-free and admits a CS-separator of size $\mathcal{O}(n^7)$,
according to the previous point; or (b) claw-free, in which case it
also admits a CS-separator of size $\mathcal{O}(n^4)$ (the polynomial
bound in \cite{BLT} is much worse but stated in a more general
context; it can be improved in the specific case of claw-free graphs).

Now, by~\ref{item:2}, prime $(A_4, A_5, A_6, \ck{7})$-free atoms are
nearly $D_6$- and $E_6$-free, so by
Theorem~\ref{thm:prime-atoms-nearly-Cp}, every $(A_4, A_5, A_6,
\ck{7})$-free graph on $n$ vertices admits a CS-separator of size
$\mathcal{O}(n^{10})$.  

Finally, by~\ref{item:1}, any apple-free graph $G$ on $n$ vertices is
either claw-free or $(A_4, A_5, A_6, \ck{7})$-free, so in any case it
admits a CS-separator of size $\mathcal{O}(n^{10})$.
\end{proof}

When the polynomial CS-Separation property holds in a class $\mathcal{C}$ of
graphs, it is natural to wonder whether the following stronger
condition also holds: 
does the Stable Set polytope admit a compact extended formulation in  $\mathcal{C}$?
  We
can give a negative answer to this question in apple-free graphs:
indeed, Rothvoss \cite{R} proved the existence of line graphs with no compact extended formulation. 
Since apple-free graphs contain all claw-free
graphs and in particular all line graphs, it follows that there is no compact extended formulation for apple-free graphs. In other words, the
extension complexity of the Stable Set polytope in this class
is not polynomial.

\subsection{Cap-free graphs}
\label{subsec:cap-free}

A \emph{hole} is a chordless cycle of length at least 4.  A \emph{cap}
is a hole with an additional vertex adjacent to exactly two consecutive
vertices in the hole.  A graph is \emph{cap-free} if it does not
contain any cap graph as an induced subgraph.  In the context of the
Strong Perfect Graph Conjecture, Conforti et~al.~\cite{CCK} studied
various classes of graphs, including the class of cap-free graphs, for
which they provide a decomposition theorem and a polynomial-time
recognition algorithm.  A \emph{basic cap-free graph} is either a
chordal graph or a 2-connected triangle-free graph with at most one
additional universal vertex, which we denote by (2-connected)
\emph{almost triangle-free graph}.  They proved the following:
\begin{theorem}[\cite{CCK}]\label{thm:decompo-cap}
Every connected cap-free graph either has an amalgam or is a basic
cap-free graph.
\end{theorem}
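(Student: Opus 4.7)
The plan is to prove this decomposition theorem by structural analysis on a connected cap-free graph $G$ assumed not to be basic, showing that $G$ must admit an amalgam. First I would dispose of the chordal case: if $G$ has no hole, then $G$ is basic and we are done. So assume $G$ contains a hole $H = v_0 v_1 \cdots v_{k-1}$ with $k \geq 4$.

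The backbone of the argument is an \emph{attachment lemma}: for any vertex $u \in V(G) \setminus V(H)$, the set $N(u) \cap V(H)$ cannot equal exactly $\{v_i, v_{i+1}\}$ for some $i$ (indices mod $k$), because otherwise $\{u\} \cup V(H)$ would induce a cap. I would develop this into a full classification of external vertices by attachment type to $H$ (zero neighbors, one neighbor, several pairwise non-consecutive neighbors, or a long interval of consecutive neighbors with at least three), then extend it to show that arbitrary short paths between two prescribed vertices outside $H$ must avoid certain configurations.

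With this in hand, I would split on whether $G$ has a triangle. If $G$ is triangle-free, then since $G$ is not basic, either $G$ fails to be $2$-connected (in which case I would convert a cut-vertex structure into an amalgam with $C$ a single vertex after verifying the size constraints $|A_i \cup B_i| \geq 2$), or the universal-vertex count is wrong and one extracts an amalgam from the resulting non-homogeneous attachment to $H$. If $G$ has a triangle $T$, I would build the clique $C$ of the amalgam as a maximal clique extending the set of vertices fully joined to the ``bridge'' between $T$ and $H$, and then define $A_1 \cup A_2$ to be the vertices of $V(G) \setminus C$ complete to $C$, splitting them according to connected components of an auxiliary graph on $V(G) \setminus C$ that records reachability by paths avoiding vertices complete to $C$. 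The $B_i$ would be the remaining vertices in each component.

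The hard part will be simultaneously verifying (i) completeness between $A_1$ and $A_2$, (ii) anticompleteness of each $B_i$ to the opposite side, and (iii) the non-triviality conditions $A_1, A_2 \neq \emptyset$ and $|A_i \cup B_i| \geq 2$. For (i), a non-edge between $a_1 \in A_1$ and $a_2 \in A_2$ would, via a path through the opposite component and some judiciously chosen vertex of $C$, produce a hole that together with a vertex of $C \setminus \{$the chosen vertex$\}$ induces a cap, contradicting cap-freeness. The argument for (ii) is dual. The non-triviality hinges on choosing the initial witness (the hole $H$ and triangle $T$) to have ``enough room''; I would expect to need an iterative construction of $C$ that absorbs any vertex complete to both sides, and a careful opening case analysis to guarantee both sides remain non-empty. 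The technical heart will likely be a sequence of small ``cap-forbidding'' lemmas chained together to close off every would-be counterexample.
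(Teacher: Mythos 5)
This statement is not proved in the paper at all: it is quoted verbatim from Conforti, Cornu\'ejols, Kapoor and Vu\v{s}kovi\'c \cite{CCK}, whose proof of the amalgam decomposition for cap-free graphs is a long, self-contained piece of structural graph theory. So there is no in-paper argument to compare yours against; the only fair comparison is with the original.

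Measured against that, your proposal is an outline rather than a proof, and the gap is precisely where the difficulty of \cite{CCK} lies. The opening moves are fine (a chordal graph is basic; a vertex with exactly two consecutive neighbours on a hole yields a cap), but everything from there on is deferred: the classification of attachments to a hole is harder than you state (a vertex seeing two consecutive vertices \emph{plus} other vertices of $H$ is not excluded by the hole $H$ itself, and ruling out the bad attachment types requires finding caps on \emph{other} holes inside $H\cup\{u\}$); and the entire triangle case --- which is the heart of the theorem --- is described only as ``I would build the clique $C$\ldots'' followed by an explicit admission that the completeness, anticompleteness and non-triviality verifications remain to be done. Those verifications are the theorem. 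There is also a concrete slip in the easy case: a cut-vertex $v$ does not give an amalgam with $C=\{v\}$, since then $A_1$ and $A_2$ would lie in different components of $G\setminus v$ and could not be complete to each other; you need $C=\emptyset$, $A_1=\{v\}$, $A_2=N(v)\cap X_1$ for one component $X_1$, with the degenerate case (all components singletons) absorbed into the chordal/basic case. As it stands the proposal identifies the right genre of argument but does not constitute a proof, and for the purposes of this paper the correct move is simply to cite \cite{CCK}.
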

Recall that chordal graphs have the linear CS-separator property.
 Moreover, almost
triangle-free graphs have a quadratic number of maximal cliques (each edge
plus the universal vertex). So connected basic cap-free graphs have the quadratic-CS-Separation property.

We can finally prove the following:
%
%

\begin{theorem}
Cap-free graphs have the $\mathcal{O}(n^5)$-CS-Separation property.
\end{theorem}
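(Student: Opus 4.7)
My plan is to mimic the strategy of Theorem~\ref{thm:prime-atoms-nearly-Cp}: recursively build a valid decomposition tree $T(G)$ for any cap-free graph $G$ whose leaves are basic cap-free graphs, then bound $|V(T(G))|$ via an injective $\mathcal{S}$-labeling. The recursive rules I would apply, in order of priority, are as follows. If $G$ is basic cap-free, stop and make $G$ a leaf. Otherwise, if $G$ is disconnected, use a component decomposition. Otherwise, if $\overline{G}$ is disconnected, use an anticomponent decomposition. Otherwise, $G$ is connected and anticonnected and not basic, so by Theorem~\ref{thm:decompo-cap} it admits an amalgam split $(B_1,A_1,C,A_2,B_2)$, which I decompose along. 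Each rule yields a valid decomposition by Lemma~\ref{lem:valid decompo}, and the leaves have the $\mathcal{O}(n^2)$-CS-Separation property as observed before the theorem statement.

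To bound the tree size, I would take $\mathcal{S}$ to be the set of trios of $G$ (subsets of size at most three containing a non-edge), so $|\mathcal{S}|=\mathcal{O}(n^3)$, and define an $\mathcal{S}$-labeling $\ell$ of the internal nodes as follows. At a component node with $\varphi(t)=V_1\uplus V_2$ anticomplete, take $\ell(t)=\{v_1,v_2\}$ with $v_i\in V_i$. At an anticomponent node with $\varphi(t)=V_1\uplus V_2$ complete, the graph $G[\varphi(t)]$ is not a clique (cliques are chordal, hence leaves), so without loss of generality $V_1$ contains a non-edge $\{u,v\}$; set $\ell(t)=\{u,v,v_2\}$ for some $v_2\in V_2$. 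At an amalgam node, since the anticomponent rule did not trigger, $\overline{G[\varphi(t)]}$ is connected, which forces at least one of $B_1,B_2$ to be non-empty; say $B_1\neq \emptyset$. Pick $b_1\in B_1$ and any $u\in B_2\cup(A_2\setminus\{a_2\})$, which is non-empty because $|A_2\cup B_2|\geq 2$, and set $\ell(t)=\{b_1,u\}$, which is a non-edge since $B_1$ is anticomplete to $A_2\cup B_2$.

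Condition~(1) of Lemma~\ref{lem: injective labeling} is immediate from the construction in each case. For condition~(2), $\varphi(s)\cap\varphi(s')$ is empty in the component and anticomponent cases, while in the amalgam case it equals $C\cup\{a_1,a_2\}$, which is a clique because $C$ is a clique complete to $\{a_1,a_2\}$ and $a_1a_2\in E(G)$ by completeness of $A_1$ to $A_2$; hence this intersection contains no non-edge and therefore no trio. Thus $\ell$ is injective, $|V(T(G))|=\mathcal{O}(n^3)$, and Lemma~\ref{lem: CS-Sep from poly valid decompo tree} yields a CS-Separator of size $\mathcal{O}(n^3\cdot n^2)=\mathcal{O}(n^5)$. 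The main subtlety I anticipate is precisely the argument that $B_1\cup B_2\neq\emptyset$ at an amalgam node: Theorem~\ref{thm:decompo-cap} by itself gives no control over $B_1$ and $B_2$, which is exactly why the anticomponent rule must strictly precede the amalgam rule in the case analysis.
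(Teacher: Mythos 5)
Your proposal is correct and follows essentially the same route as the paper: the same three decomposition rules in the same priority order, the same trio-based $\mathcal{S}$-labeling, and in particular the same key observation that $B_1\cup B_2\neq\emptyset$ at an amalgam node precisely because the anticomponent rule takes precedence. The only (harmless) cosmetic difference is that your amalgam label is a non-edge $\{b_1,u\}$ rather than the paper's three-vertex trio $\{b_1,u_2,v_2\}$ with $u_2,v_2\in A_2\cup B_2$; both satisfy the hypotheses of Lemma~\ref{lem: injective labeling}.
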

\begin{proof}
We follow the method introduced in Section~\ref{sec:tree} by
recursively decomposing every member $G$ of $\mathcal{C}$ along
components, anticomponents and amalgams until reaching basic cap-free graphs.

For every $G\in\mathcal{C}$, we recursively define a valid decomposition tree
$T(G)$ with leaves in basic cap-free graphs as follows.  If
$G$ is a basic cap-free graph, then the root $r$ is the only node
of $T(G)$.  Otherwise, we use a valid decomposition $(G_1, G_2)$ of
$G$ as described below and $T(G)$ is obtained from $T(G_1)$ and
$T(G_2)$ by connecting the root $r$ to the respective roots of
$T(G_1)$ and $T(G_2)$.  Moreover the map $\varphi:V(T)\to
\mathcal{P}(V(G))$ is naturally obtained from the maps $\varphi_1$ and
$\varphi_2$ associated to $T(G_1)$ and $T(G_2)$ respectively, by
setting $\varphi(t)=\varphi_i(t)$ if $t\in V(T(G_i))$ for $i=1, 2$,
and $\varphi(r)=V(G)$.  We proceed as follows:

\begin{itemize}
\item  
If $G$ is not connected, we use a component decomposition.  
\item If $G$ is not anticonnected, we use an anticomponent decomposition.  
\item 
Otherwise, if $G$ has an amalgam, we use an amalgam
decomposition. 
\end{itemize}

By Theorem~\ref{thm:decompo-cap}, every connected cap-free graph either has an amalgam 
or is a basic cap-free graph. Hence $T(G)$ is well-defined (but maybe not unique - this
does not matter).
We define an injective $\mathcal{S}$-labeling of $T(G)$,
for some well-chosen $\mathcal{S}$.
Let $\mathcal{S}$ be the set of trios of $G$ (as in the proof of 
Theorem~\ref{thm:prime-atoms-nearly-Cp}, a \emph{trio} is a set of at most three vertices containing a non-edge).  Clearly $|\mathcal{S}|\le |V(G)|^3$.

Once again, we distinguish cases depending on the rule that was used
to decompose each internal node $t$.
\begin{itemize}
\item 
Rule 1: $G[\varphi(t)]$ is not connected, i.e. $\varphi(t)=V_1\uplus
V_2$ with $V_1$ anticomplete to $V_2$.  Let $v_i$ be any vertex of
$V_i$, for $i=1,2$ and define $\ell(t)=\{v_1, v_2\}$.
\item 
Rule 2: $G[\varphi(t)]$ is not anticonnected, i.e., $\varphi(t)=
V_1\uplus V_2$ with $V_1$ complete to $V_2$.  Since $\varphi(t)$ is not a
clique (otherwise, $t$ would be a leaf), there exists a non-edge $uv$
in, say, $V_1$.  Let $v_2$ be any vertex of $V_2$ and define
$\ell(t)=\{u, v, v_2\}$.
\item 
Rule 3:  $G[\varphi(t)]$  contains an amalgam, i.e., $\varphi(t)=
C \uplus A_1 \uplus A_2 \uplus B_1 \uplus B_2$ 
with $C$ a clique complete to $A_1 \cup A_2$, $A_1$ complete to $A_2$, 
$B_1$ anticomplete to $A_2 \cup B_2$ and symmetrically $B_2$ anticomplete 
to $A_1 \cup B_1$.  Note that $B_1 \cup B_2 \neq \emptyset$ for otherwise, 
$G[\varphi(t)]$ would not be anticonnected and we would have applied Rule 2 instead.  So we may assume that $B_1$ contains at least one vertex $b_1$. By definition of an amalgam decomposition, there exists at least two vertices $u_2, v_2\in A_2\cup B_2$. Then we define  $\ell(t) = \{b_1, u_2, v_2\}$.
\end{itemize}

For Rules 1 to 3, we have that $\ell(t) \ns \varphi(s)$ and $\ell(t)
\ns \varphi(s')$ where $s, s'$ are the children of $t$.  Moreover, for
Rules 1 and 2, $\varphi(s)$ and $\varphi(s')$ do not intersect, and 
for Rule 3, $\varphi(s)\cap \varphi(s')$ is a clique. Hence $\varphi(s)\cap
\varphi(s')$ does not contain any trio.  Hence
$\ell$ satisfies both conditions of Lemma \ref{lem: injective
labeling}, consequently $\ell$ is injective.  This implies that
$|V(T(G))|$ is a $\mathcal{O}(n^3)$.  
Moreover, for every leaf $f$, $G[\varphi(f)]$ is a basic cap-free graph, hence it
 admits a CS-Separator of size
$\mathcal{O}(n^2)$.  According
to Lemma \ref{lem: CS-Sep from poly valid decompo tree}, $G\in
\mathcal{C}$ admits a CS-Separator of size
$\mathcal{O}(n^5)$.  
This concludes the proof.
\end{proof}

\subsection{Diamond-wheel-free and $k$-windmill-free graphs}
Although it was already known that both diamond-wheel-free-graphs and
$k$-windmill-free graphs have the polynomial CS-Separation property, as proved in
\cite[sections 2.3.2 and 3.4.3]{La}, it is worth noting that the
general framework discussed in this paper works just fine with those
classes of graphs, using $\mathcal{C'}$-neighborhood decomposition with some well-chosen class $\mathcal{C'}$.

\section{Limits of the graph decompositions}\label{sec:limit}


In this section, contrary to previous sections, we present a negative result.
More precisely, we highlight that some decomposition theorems
have no chance to be useful for proving the polynomial CS-Separation property.
This happens when the decomposition theorem is based only on one classical decomposition called star-cutset decomposition.

A \emph{star-cutset} in a graph $G$ is a vertex cut $X$ such that
there is a vertex $x\in X$ that is adjacent to every vertex in
$X\setminus\{x\}$.  Let $V_1, V_2$ be a partition of $G[V \setminus
X]$ such that $V_1$ is anticomplete to $V_2$.  Then (as usual) $(G[V_1
\cup X], G[V_2 \cup X])$ is a cutset decomposition of $G$, and in this
case we call it a \emph{star-cutset} decomposition.

\begin{theorem}\label{thm:star-cutset}
There exists a class $\mathcal{D}$ of graphs such that:
\begin{enumerate}
\item every graph of $\mathcal{D}$ is either a clique or admits a star-cutset decomposition $(G_1, G_2)$ with $G_1, G_2\in \mathcal{D}$, and
\item $\mathcal{D}$ does not have the polynomial CS-Separator property.
\end{enumerate}
\end{theorem}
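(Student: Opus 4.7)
The plan is to let $\mathcal{D}$ be the class of all graphs admitting at least one universal vertex (so every complete graph belongs to $\mathcal{D}$). I would first verify the recursive decomposition property. Given $G \in \mathcal{D}$ that is not a clique, $G$ contains some non-edge $\{x,y\}$; any universal vertex $u$ of $G$ is distinct from both $x$ and $y$ since $u$ is adjacent to everyone. Thus $X = V(G) \setminus \{x,y\}$ is a vertex cut (because $x$ and $y$ are non-adjacent), and it is a star-cutset centered at $u$. The two blocks of the corresponding star-cutset decomposition are $G_1 = G - y$ and $G_2 = G - x$, each of which still has $u$ as a universal vertex and therefore belongs to $\mathcal{D}$, so the decomposition stays inside the class.

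Next I would establish that $\mathcal{D}$ fails the polynomial CS-separation property. I would invoke G\"o\"os's result \cite{G}, which produces a sequence $(H_n)_{n\geq 1}$ of graphs on $n$ vertices whose minimum CS-separator size is superpolynomial in $n$. For each $n$, form $H_n^+$ by adjoining a new vertex $u_n$ adjacent to every vertex of $H_n$; clearly $H_n^+ \in \mathcal{D}$. A short lifting argument shows that any CS-separator of $H_n^+$ restricts to a CS-separator of $H_n$ of the same cardinality: if $K$ is a clique and $S$ a disjoint stable set of $H_n$, then $K \cup \{u_n\}$ is a clique of $H_n^+$ disjoint from the stable set $S$, so any cut $(W,W')$ of $H_n^+$ separating them restricts to the cut $(W \setminus \{u_n\}, W' \setminus \{u_n\})$ of $H_n$ that still separates $K$ from $S$. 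Hence a polynomial-size CS-separator for every graph of $\mathcal{D}$ would yield a polynomial-size CS-separator for $(H_n)$, contradicting \cite{G}.

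The only genuinely delicate point in the plan is guaranteeing that \emph{both} blocks of every chosen star-cutset decomposition remain inside $\mathcal{D}$; this is what would fail for most natural candidate classes. By choosing $\mathcal{D}$ via the single very cheap invariant ``has a universal vertex'', which is preserved under deletion of any other vertex, the closure is automatic and the recursion terminates at cliques without any extra bookkeeping. Any strengthening of the defining condition (e.g.\ restricting to perfect graphs, or forbidding induced subgraphs) would immediately compromise either the decomposition closure or the presence of bad graphs like $H_n^+$, which is precisely why the bare ``universal vertex'' class is the natural candidate.
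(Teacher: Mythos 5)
Your proposal is correct and follows essentially the same route as the paper: the paper defines $\mathcal{D}$ as the graphs obtained by adding a universal vertex to members of (the hereditary closure of) a class with no polynomial CS-separator, explicitly noting that this class may be taken to be all graphs, which recovers exactly your ``has a universal vertex'' class; the star-cutset $V(G)\setminus\{x,y\}$ on a non-edge and the lifting/restriction argument against G\"o\"os's graphs are the same. No gaps.
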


\begin{proof}
Let $\mathcal{C}$ be a class of graphs that does not have the polynomial
CS-Separation property.  Such a class of graphs exist by~\cite{G} (it may be the class of all graphs). 
Let us now denote by $\mathcal{C}'$ the closure of $\mathcal{C}$ by taking
induced subgraphs, then we define
  $$\mathcal{D}=\{H=(V,E) \ |\  \exists x\in V(H) \text{ such that $x$ is a universal vertex and } H[V\setminus x]\in \mathcal{C'}\}$$
 Let us prove that $\mathcal{D}$ satisfies the first item.
Let $G$ be a graph of $\mathcal{D}$ that is not a clique.  Let $u,v$
be two non-adjacent vertices.  Let $x$ be a universal vertex of $G$, which exists by definition of $\mathcal{D}$. 
 Note that $(x \cup N(x)) \setminus
\{u,v \} = V \setminus \{ u,v \}$ is a star-cutset.  So one can
decompose $G$ into $G_1= G[V \setminus u]$ and $G_2=G[V \setminus v]$.
By definition of $\mathcal{D}$, both graphs $G_1$ and $G_2$ are in $\mathcal{D}$ because $x$
remains a universal vertex in $G_1, G_2$, and $G_1\setminus x, G_2\setminus x$ are induced subgraphs of $G\setminus x \in \mathcal{C'}$.

Moreover, we prove by contradiction that $\mathcal{D}$ satisfies the
second item.  Assume that $\mathcal{D}$ has the polynomial
CS-Separation property, for some polynomial $P$.  Let $G\in
\mathcal{C}$, then $G\in \mathcal{C'}$ so the graph $H$ obtained from
$G$ by adding a universal vertex $x$ is in $\mathcal{D}$.  In
particular, it admits a CS-Separator $\mathcal{F}$ of size
$P(|V(H)|)$, and replacing every cut $(B,W)$ of $\mathcal{F}$ by
$(B\setminus x, W\setminus x)$ provides a CS-Separator for $G$ of the
same size.  Hence $\mathcal{C}$ has the polynomial CS-Separation
property, a contradiction.
\end{proof}

A drawback of this theorem is that $\mathcal{D}$ is not hereditary.
It would be nice to know whether there a exists a hereditary class of
graphs satisfying the condition of Theorem \ref{thm:star-cutset}.



\end{document}